\newtheorem{theorem}{Theorem}[section]
\newtheorem{corollary}[theorem]{Corollary}
\newtheorem{lemma}[theorem]{Lemma}
\newtheorem{proposition}[theorem]{Proposition}
\theoremstyle{definition}
\newtheorem{definition}[theorem]{Definition}
\newtheorem{example}[theorem]{Example}
\newtheorem{problem}{Problem}
\newtheorem{remark}[theorem]{Remark}
\numberwithin{equation}{section}
\begin{document}
\title{Connectedness of two-sided group digraphs and graphs}
\author{Patreck Chikwanda}
\address{Department of Mathematics and Statistics,
Georgia State University,
Atlanta, GA 30303}
\email{pchikwanda@gsu.edu}
\author{Cathy Kriloff}
\address{Department of Mathematics and Statistics, Idaho State University, Pocatello, ID 83209-8085}
\email{krilcath@isu.edu, leeyunt@isu.edu, sandtayl@isu.edu, smitgarr@isu.edu, yerodmyt@isu.edu}
\author{Yun Teck Lee\textsuperscript{*}}
\author{Taylor Sandow\textsuperscript{*}}
\author{Garrett Smith\textsuperscript{*}}
\author{Dmytro Yeroshkin}
\renewcommand{\shortauthors}{P.~Chikwanda, C.~Kriloff, Y.T.~Lee, T.~Sandow, G.~Smith, D.~Yeroshkin}
\thanks{\textsuperscript{*} Supported by Idaho State University Career Path Internship funds.}

\subjclass[2010]{Primary: 05C25; Secondary: 05C20, 05C40}

\keywords{two-sided group digraph, Cayley graph, group, connectivity}

\begin{abstract}
Two-sided group digraphs and graphs, introduced by Iradmusa and Praeger, provide a generalization of Cayley digraphs and graphs in which arcs are determined by left and right multiplying by elements of two subsets of the group.
We characterize when two-sided group digraphs and graphs are weakly and strongly connected and count connected components, using both an explicit elementary perspective and group actions.  
Our results and examples address four open problems posed by Iradmusa and Praeger that concern connectedness and valency.  We pose five new open problems.
\end{abstract}
\maketitle

%%%%%%%%%%%%%%%%%%%%%%%%%%%%%%%%%%%%%%%%%%
%              Introduction              %
%%%%%%%%%%%%%%%%%%%%%%%%%%%%%%%%%%%%%%%%%%

\section{Introduction}
\label{sec:intro}

Two-sided group digraphs were introduced as a generalization of Cayley digraphs by Iradmusa and Praeger~\cite{IrdamusaPraeger2016} and independently in~\cite{Anil2012} (see~\cite[Remark 1.6]{IrdamusaPraeger2016}).  Given a group $G$ and a subset $S$ of $G$, the \textit{Cayley digraph} $\text{Cay}(G,S)$ has the elements of $G$ as vertices and a directed arc from $g$ to $h$ when $gh^{-1} \in S$.
Several authors have generalized this idea by relaxing the group conditions or the nature of the multiplication (see~\cite{ABR1990,MSS1992,Gauyacq1997,KP2003}).  The \textit{two-sided group digraph} $2\mathrm{S}(G;L,R)$ also has elements of a group $G$ as vertices, but two nonempty subsets, $L$ and $R$, of $G$ are used to define an arc from vertex $g$ to vertex $h$ in $G$ when $h=l^{-1}gr$ for some $l\in L$ and $r\in R$.  
As with Cayley digraphs, by definition $2\mathrm{S}(G;L,R)$ does not have multiple arcs between two vertices, even though it is possible that $l_1^{-1}gr_1=l_2^{-1}gr_2$ for $l_1\neq l_2$ and $r_1 \neq r_2$ (see Section~\ref{sec:prelim}).
A Cayley digraph is undirected when $S=S^{-1}$ and the digraph $2\mathrm{S}(G;L,R)$ is undirected when $L^{-1}gR=LgR^{-1}$ for all $g \in G$, but we do not assume this.

It is worth noting that a continuous version of a two-sided group digraph has previously appeared in the context of Riemannian geometry as the study of biquotients. Introduced in 1974 by Gromoll and Meyer~\cite{Gromoll-Meyer}, biquotients are viewed as the quotient space of a two-sided Lie group action and have been studied systematically as a source of manifolds with positive and non-negative curvature since the work of Eschenburg~\cite{Eschenburg-Paper, Eschenburg-Habilitation}. We refer to DeVito's thesis~\cite{DeVito-thesis} for a broader overview of the topic.

Iradmusa and Praeger explore several properties of two-sided group digraphs and pose eight open problems.  Here we address the first four problems, which concern valency and connectedness.  It would also be of interest to know whether there exist vertex-transitive two-sided group digraphs that are not isomorphic to Cayley digraphs since these would have potential applications to routing and communication schemes in interconnection networks. 
Indeed, the remaining unresolved questions in~\cite{IrdamusaPraeger2016} primarily address understanding when two-sided group digraphs are vertex-transitive and when they are isomorphic to Cayley digraphs.  In addition, we propose five new problems related to our results below.

Our main focus is to generalize~\cite[Theorem~1.8]{IrdamusaPraeger2016}, which gives necessary and sufficient conditions
for a two-sided group digraph $2\mathrm{S}(G;L,R)$ to be connected, assuming that $L$ and $R$ are inverse-closed. 
Theorem~\ref{theorem:main_connectedness_result} solves Problem~4 in~\cite{IrdamusaPraeger2016} by characterizing when $2\mathrm{S}(G;L,R)$ is connected without the inverse-closed assumption on $L$ and $R$.  Examples~\ref{example:A4a} through~\ref{example:A4b} in Section~\ref{sec:prelim} both illustrate Theorem~\ref{theorem:main_connectedness_result} and address Problems~1 and~2 in~\cite{IrdamusaPraeger2016} by showing that it is possible for $2\mathrm{S}(G;L,R)$ to have constant out-valency but not constant in-valency and to be regular of valency strictly less than $|L|\cdot|R|$.

In Section~\ref{sec:connected_components}, building on results in Section~\ref{sec:connection_length}, we use elementary methods similar to those in our proof of Theorem~\ref{theorem:main_connectedness_result} to generalize further.  In Theorems~\ref{theorem:number_weak_components} and~\ref{theorem:number_strong_components}, under the assumption that elements in $G$ can be factored appropriately, we count weakly and strongly connected components, show such components must all be of the same size, and characterize their vertices.  The result that all components have the same size addresses Problem~3 of~\cite{IrdamusaPraeger2016}. We also show that the connected components are in fact isomorphic under a condition on the normalizers of $L$ and $R$.  
To illustrate we provide Corollaries~\ref{cor:weakconn} and~\ref{cor:strongconn} that give simple characterizations of weak and strong connectedness and give Example~\ref{example:D10} in which components are isomorphic and Example~\ref{example:D6B} in which they are not.

In Section~\ref{sec:double_cosets} we drop the factorization assumptions and note that connected components are contained within double cosets.
Results analogous to those in Section~\ref{sec:connected_components} apply within a given double coset and examples demonstrate that in different double cosets the sizes of the connected components can differ.

A less explicit but more natural approach to counting strongly connected components is to view the components as orbits under a group action and to use a standard result that counts orbits.  This is done in Section~\ref{sec:orbitcounting}.  

In Section~\ref{sec:reduction} we prove that when $G$ is a semi-direct product, $G=H \rtimes K$, it is possible to determine whether $2\mathrm{S}(G;L,R)$ is connected by analyzing connectedness properties related to $H$ in $2\mathrm{S}(G;L,R)$ and a two-sided group digraph on $K$. 
We also generalize this to the case where $K$ is $G/H$ for $H$ a normal subgroup of $G$.

%%%%%%%%%%%%%%%%%%%%%%%%%%%%%%%%%%%%%%%%%%
%              Preliminaries             %
%%%%%%%%%%%%%%%%%%%%%%%%%%%%%%%%%%%%%%%%%%

\section{Preliminaries}
\label{sec:prelim}

Following some definitions, we begin with an initial result that characterizes when a two-sided group digraph is strongly connected.  After some examples we compare Theorem~\ref{theorem:main_connectedness_result} to~\cite[Theorem~1.8]{IrdamusaPraeger2016}.

Recall the following definition from~\cite{IrdamusaPraeger2016}.

\begin{definition}
For nonempty subsets $L$ and $R$ of a group $G$, a \textit{two-sided group digraph} $2\mathrm{S}(G;L,R)$ has vertex set $G$ and a directed arc $(g,h)$ from $g$ to $h$ if and only if $h=l^{-1}gr$ for some $l\in L$ and $r\in R$.
\end{definition}
The digraph $2\mathrm{S}(G;L,R)$ is undirected when $L^{-1}gR=LgR^{-1}$ for all $g \in G$, but we work in the generality of directed graphs and consider this situation to be a special case.

\begin{definition}  \label{def:word}
Let $S$ be a nonempty subset of a group $G$. A \textit{word in $S$ of (finite) length $n>0$} is a string $s_1s_2 \cdots s_n$ where $s_1,s_2, \dots, s_n \in S$.  In general, we denote a word in $S$ of length $n$ by $w_{S,n}$ and write $\mathcal{W}(S)$ for the set containing all finite length words in $S$. 
\end{definition}

Note that the factors in a word need not be distinct, a single group element will have numerous different representations as a word in $S$, and different words will be denoted by varying subscripts for the set or length on the letter $w$.

\begin{definition} \label{def:strong_connectedness}
If $g$ and $h$ are vertices in a digraph, then $g$ is \textit{strongly connected} to $h$ if there exists a directed path from $g$ to $h$ and a directed path from $h$ to $g$. A digraph is \textit{strongly connected} if every pair of vertices is strongly connected. 
\end{definition}

\begin{theorem} \label{theorem:main_connectedness_result}
The two-sided group digraph $2\mathrm{S}(G;L,R)$ is strongly connected if and only if $G = \mathcal{W}(L^{-1})\mathcal{W}(R) = \mathcal{W}(L)\mathcal{W}(R^{-1})$ and the identity $e = w_{L^{-1},i+1}w_{R,i} = w_{L^{-1},j}w_{R,j+1}$ for some $i,j \in \mathbb{N}$.
\end{theorem}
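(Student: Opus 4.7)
My plan is to first reformulate the problem in terms of word identities and then reduce strong connectedness to a simpler pair of reachability statements. A directed path of length $n$ from $g$ to $h$ in $2\mathrm{S}(G;L,R)$ corresponds exactly to an equation $h = w_{L^{-1},n}\, g\, w_{R,n}$ with equal-length words, so $e$ reaches $g$ iff $g = w_{L^{-1},n}\, w_{R,n}$ for some $n$, and inverting $e = w_{L^{-1},n}\, g\, w_{R,n}$ shows $g$ reaches $e$ iff $g = w_{L,n}\, w_{R^{-1},n}$ for some $n$. Since concatenating a path $g \to e$ with a path $e \to h$ produces a path $g \to h$, strong connectedness is equivalent to the pair of statements that $e$ reaches every element and is reached by every element. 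The theorem thus reduces to proving these two reachability conditions are equivalent to the four stated conditions.

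For necessity, both $G = \mathcal{W}(L^{-1})\mathcal{W}(R)$ and $G = \mathcal{W}(L)\mathcal{W}(R^{-1})$ are immediate from the equal-length factorizations above. For the identity conditions, I fix any $l \in L$, use the first factorization to write $l = w_{L^{-1},n}\, w_{R,n}$, and prepend $l^{-1}$ to obtain $e = l^{-1} l = w_{L^{-1},n+1}\, w_{R,n}$, giving the first identity with $i=n$. Symmetrically, picking $r \in R$ and writing $r^{-1} = w_{L^{-1},n}\, w_{R,n}$, appending $r$ gives $e = r^{-1} r = w_{L^{-1},n}\, w_{R,n+1}$, the second identity with $j=n$.

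For sufficiency, the core trick is to adjust length differences by inserting copies of $e$ at the boundary. Given $g \in G$, write $g = uv$ with $u \in \mathcal{W}(L^{-1})$ of length $a$ and $v \in \mathcal{W}(R)$ of length $b$ using $G = \mathcal{W}(L^{-1})\mathcal{W}(R)$; if $a > b$, insert $e = w_{L^{-1},j}\, w_{R,j+1}$ between $u$ and $v$ to obtain $g = (u\, w_{L^{-1},j})(w_{R,j+1}\, v)$, again a $\mathcal{W}(L^{-1})\mathcal{W}(R)$ factorization but with length difference reduced by one. Iterating $a-b$ times yields an equal-length factorization and hence a path $e \to g$; the case $a<b$ uses the other identity. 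To show every $g$ reaches $e$, I invert $e = w_{L^{-1},i+1}\, w_{R,i}$ to exhibit an element $p$ which is simultaneously a word of length $i+1$ in $L$ and a word of length $i$ in $R$; then $e = p\, p^{-1}$ is a $\mathcal{W}(L)\mathcal{W}(R^{-1})$ factorization of length difference $+1$, and the other identity similarly produces length difference $-1$. Combined with $G = \mathcal{W}(L)\mathcal{W}(R^{-1})$, the same adjustment trick yields equal-length $(L)(R^{-1})$ factorizations of every $g$, providing paths $g \to e$.

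The main obstacle will be seeing the right length-adjustment move: one must recognize that inserting a length-asymmetric factorization of $e$ at the $uv$-boundary yields another valid $\mathcal{W}(L^{-1})\mathcal{W}(R)$ factorization of $g$ with the length difference shifted by exactly one, and that the two given identities automatically produce analogous $(L)(R^{-1})$ identities via the trick $e = p\, p^{-1}$ where $p$ admits two distinct word representations.
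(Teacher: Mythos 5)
Your proposal is correct and follows essentially the same route as the paper's proof: characterize directed paths by equal-length word equations, read off the factorizations $G=\mathcal{W}(L^{-1})\mathcal{W}(R)=\mathcal{W}(L)\mathcal{W}(R^{-1})$ and the off-by-one identities for $e$ from reachability to and from the identity, and in the converse direction equalize word lengths by repeatedly inserting the length-asymmetric factorizations of $e$ at the boundary. The only (cosmetic) differences are that you obtain $e=w_{L^{-1},i+1}w_{R,i}$ by prepending $l^{-1}$ to a path $e\to l$ rather than using a path $l^{-1}\to e$, and you package the conversion of the $L^{-1}R$ identities into $LR^{-1}$ identities via an element $p$ with two word representations instead of directly inverting the words.
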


\begin{proof}
Assume that the two-sided group digraph $2\mathrm{S}(G;L,R)$ is strongly connected. Then given any $g \in G$, there exists a directed path from the identity element $e$ to $g$, meaning $g = w_{L^{-1},n}ew_{R,n} = w_{L^{-1},n}w_{R,n}$.  Hence $g \in \mathcal{W}(L^{-1})\mathcal{W}(R)$ and $G = \mathcal{W}(L^{-1})\mathcal{W}(R)$. Since there also exists a directed path from $g$ to $e$, we know $e = w_{L^{-1},m}gw_{R,m} $ which implies that $g = w_{L^{-1},m}^{-1}w_{R,m}^{-1} = w_{L,m}w_{R^{-1},m}$, and hence $G = \mathcal{W}(L)\mathcal{W}(R^{-1})$.  In particular there exists a directed path from $l^{-1}$ to $e$ where $l \in L$, and hence $e = w_{L^{-1},i} l^{-1} w_{R,i} = w_{L^{-1},i+1}w_{R,i}$ for some $i$. Similarly, $e = w_{L^{-1},j}w_{R,j+1}$ since there is a directed path from $r$ to $e$ where $r \in R$.

Conversely, suppose that $G = \mathcal{W}(L^{-1})\mathcal{W}(R) = \mathcal{W}(L)\mathcal{W}(R^{-1})$ and the identity element $e = w_{L^{-1},i+1}w_{R,i} = w_{L^{-1},j}w_{R,j+1}$ for some $i,j \in \mathbb{N}$. It suffices to show that there is a directed path from $e$ to $g$ and from $g$ to $e$ for all $g \in G$; i.e., $g = w_{L^{-1},m}w_{R,m} = w_{L,n}w_{R^{-1},n}$ for some $m,n \in \mathbb{N}$. 

Since $G = \mathcal{W}(L^{-1})\mathcal{W}(R)$, we know $g$ has an $L^{-1}R$ factorization, i.e., $g = w_{L^{-1},a}w_{R,b}$ for some $a,b \in \mathbb{N}$. If $a \neq b$, it is possible to adjust the $L^{-1}R$ factorization of $g$ so that both words have the same length by inserting the appropriate factorization of $e$ between the words from $L^{-1}$ and $R$. For example, if $a >b$, then insert $e = w_{L^{-1},j}w_{R,j+1}$ to obtain 
\[g = w_{L^{-1},a}w_{R,b} = w_{L^{-1},a}(w_{L^{-1},j}w_{R,j+1})w_{R,b} = w_{L^{-1},a+j}w_{R,b+j+1}.\]
Repeating this process yields $g = w_{L^{-1},m}w_{R,m}$ where $m = a+(a-b)j$.

To see that $g$ also has an $LR^{-1}$ factorization with words of the same length, note that left and right multiplying by inverses of the words from $L^{-1}$ and $R$ respectively converts $e = w_{L^{-1},i+1}w_{R,i} = w_{L^{-1},j}w_{R,j+1}$ into 
$e = w_{L,i+1}w_{R^{-1},i} = w_{L,j}w_{R^{-1},j+1}$. 
Repeatedly inserting the appropriate $LR^{-1}$ factorization of $e$ into an $LR^{-1}$ factorization of $g$ shows $g = w_{L,n}w_{R^{-1},n}$ for any $g \in G$. Hence $2\mathrm{S}(G;L,R)$ is strongly connected.
 \end{proof}

The following examples illustrate Theorem~\ref{theorem:main_connectedness_result} and also address the first two problems posed in\cite{IrdamusaPraeger2016}.

\begin{example}
\label{example:A4a}
Consider $\Gamma = 2\mathrm{S}(A_4;L,R)$ where $A_4$ is the alternating group on four elements, $L=\{e,(243)\}$, and $R=\{(234),(12)(34),(132),(14)(23)\}$,
as shown in Figure~\ref{fig:Problem1}. Since $G$ is generated by words in $R$ or $R^{-1}$, then $G=\mathcal{W}(L^{-1})\mathcal{W}(R)=\mathcal{W}(L)\mathcal{W}(R^{-1})$.  Also $e=e^3\cdot [(12)(34)]^2=e\cdot [(12)(34)]^2$ so the hypotheses of Theorem~\ref{theorem:main_connectedness_result} hold and thus $\Gamma$ is strongly connected.  

This example addresses Problem~1 of~\cite{IrdamusaPraeger2016} which asks whether or not $2\mathrm{S}(G;L,R)$ can have constant out-valency but not constant in-valency. The digraph $\Gamma$ has constant out-valency of $7$, however the vertices $\{(123),\ (132),\ (142),\ (143),\ (12)(34),\ (13)(24)\}$ have in-valency $6$ and the vertices $\{e,\ (234),\ (243),\ (134),\ (124),\ (14)(23)\}$ have in-valency $8$. Furthermore $2\mathrm{S}(A_4;L^{-1},R^{-1})$ will have constant in-valency of $7$ and out-valency of either $6$ or $8$ for the same sets as in $2\mathrm{S}(G;L,R)$ because inverting $L$ and $R$ changes the direction of each edge.
\end{example}

\begin{problem}
\label{problem1}
For $2\mathrm{S}(G;L,R)$ with constant out-valency, what are the possible sets of in-valencies?  In particular, how large can they be and how much can they differ from the out-valency?
\end{problem}

\begin{figure}[htbp]
\centering
\includegraphics[width=6 cm]{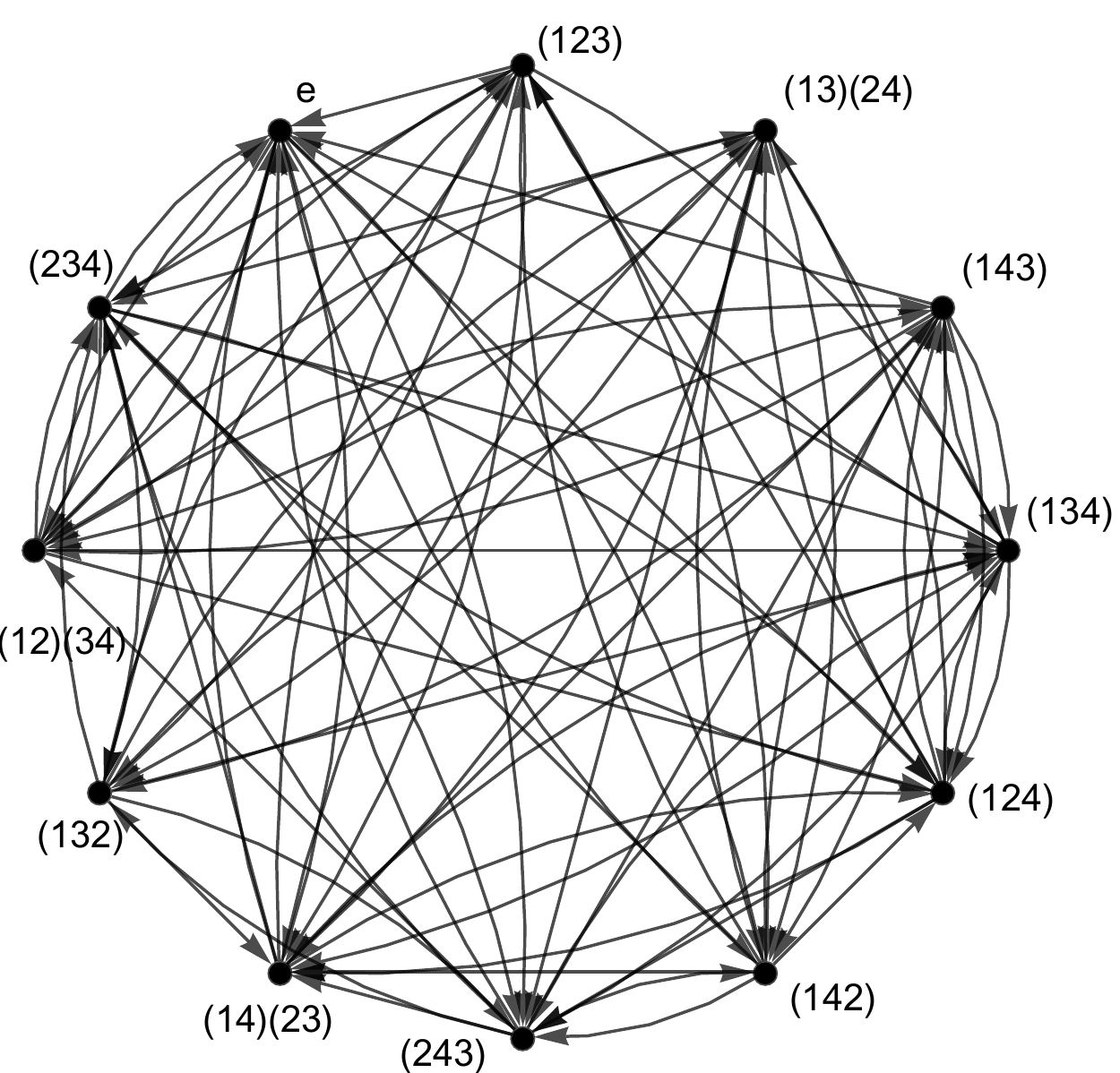}
\caption{
$2\mathrm{S}(A_4;\{e,(243)\},\{(234),(12)(34),(132),(14)(23)\})$}
\label{fig:Problem1}
\end{figure}

\begin{example}
\label{example:C7}
The two-sided group digraph $2\mathrm{S}(C_7;\{g^2,g^3\},\{e,g\})$, where $C_7$ is the cyclic group of order seven generated by $g$, satisfies the hypotheses of Theorem~\ref{theorem:main_connectedness_result} and is connected.
This example also addresses Problem~2 of~\cite{IrdamusaPraeger2016}, which asks whether or not $2\mathrm{S}(G;L,R)$ can be a regular graph of valency strictly less than $|L|\cdot|R|$.  Here $|L|\cdot |R|=4$, but as seen in Figure~\ref{fig:C7Ex}, $2\mathrm{S}(C_7;\{g^2,g^3\},\{e,g\})$ is regular with valency three.  In fact $2\mathrm{S}(C_7;\{g^2,g^3\},\{e,g\}) \cong \text{Cay}(C_7,\{g^4,g^5,g^6\})$ with $g^5$ arising in two different ways from the sets $L^{-1}$ and $R$, explaining the valency of three. 
\end{example}

\begin{figure}[htbp]
\centering
\includegraphics[width=5 cm]{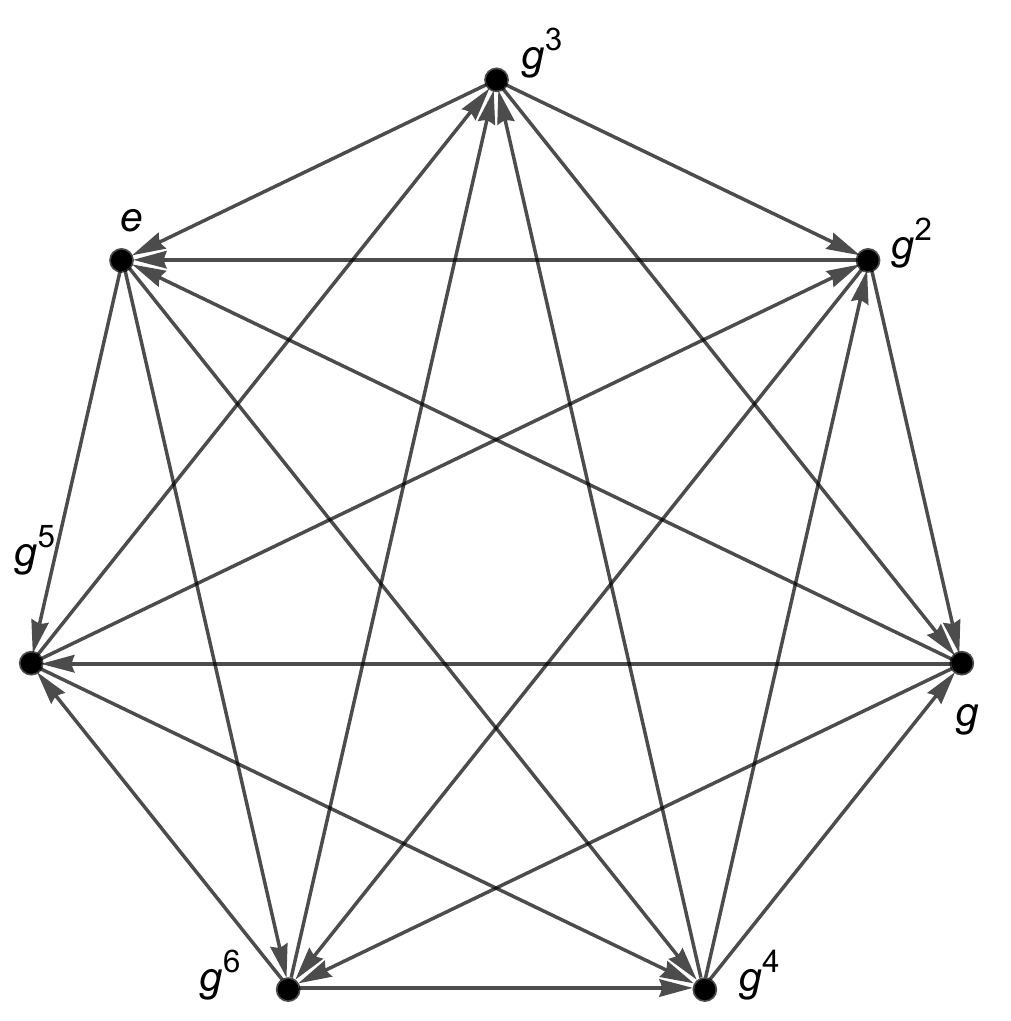}
\caption{
$2\mathrm{S}(C_7;\{g^2,g^3\},\{e,g\})$}
\label{fig:C7Ex}
\end{figure}

\begin{example}
\label{example:D6}
Consider the dihedral group, $D_6$, of order $12$, generated by the reflection $\tau$ and the rotation $\sigma$ of order $6$.  The undirected graph
$2\mathrm{S}(D_6;\{\tau,\tau\sigma^5\},\{\tau\sigma,\tau\sigma^2\})$
is regular of valency three and $|L|\cdot|R|=4$ as in Example~\ref{example:C7}, but for a non-abelian group.
For any $g \in D_6$, the set $(LL^{-1})^g\cap (RR^{-1})=\{e,\sigma,\sigma^{-1}\}$ is of size three and there is a reduction in valency by one.
See Figure~\ref{fig:Problem2D6} in Section~\ref{sec:connected_components}.
\end{example}

\begin{example}
\label{example:A4b}
The two-sided group digraph $2\mathrm{S}(A_4;A_4,\{(243),(12)(34)\})$ has $|L|\cdot |R|=24$, but is in fact regular with valency $12$ and forms the complete undirected graph with loops.  Here $(LL^{-1})^g\cap (RR^{-1})=\{e,(124),(142)\}$ is of size three for each $g$, but the reduction in valency is much larger than in the previous examples because $L^{-1}gR$, viewed as a multiset, consists of $12$ distinct elements, each with multiplicity two.
\end{example}
The set $(LL^{-1})^g\cap (RR^{-1})$, where $(LL^{-1})^g=g^{-1}LL^{-1}g$, is introduced
in~\cite[Definition~1.4(3) and Theorem~1.5]{IrdamusaPraeger2016}, and the condition $(LL^{-1})^g\cap (RR^{-1})=\{e\}$ is shown in~\cite[Lemma~3.1]{IrdamusaPraeger2016} to guarantee the valency of $2\mathrm{S}(G;L,R)$ is exactly $|L|\cdot|R|$.
In Examples~\ref{example:C7},~\ref{example:D6}, and~\ref{example:A4b} it is the failure of this condition which causes a drop in valency.
In general, if for $h \neq e$, we have $h=g^{-1}l_1l_2^{-1}g=r_1r_2^{-1}$ for some $l_1,l_2 \in L$ and some $r_1, r_2 \in R$, then $l_1^{-1}gr_1=l_2^{-1}gr_2$, which causes a multiplicity greater than one in $L^{-1}gR$ considered as a multiset.  Since the elements and their multiplicities in the multiset $L^{-1}gR$ depend on $g$, we did not search for necessary and sufficient conditions on $L$ and $R$ for a two-sided group digraph to have valency strictly less than $|L|\cdot |R|$ as requested in Problem~2 of~\cite{IrdamusaPraeger2016}.  Thus this aspect of~\cite[Problem 2]{IrdamusaPraeger2016} remains unresolved.

 Theorem~\ref{theorem:main_connectedness_result} is a generalization of the first part of the following result in~\cite{IrdamusaPraeger2016} and also addresses~\cite[Problem 4]{IrdamusaPraeger2016}.
\begin{theorem}~\cite[Theorem~1.8]{IrdamusaPraeger2016}
\label{theorem:IPtheorem}
Let $L$ and $R$ be nonempty inverse-closed subsets of a group $G$, and let $\Gamma=2\mathrm{S}(G;L,R)$.  
\begin{enumerate}
\item The graph $\Gamma$ is connected if and only if $G=\langle L \rangle \langle R \rangle$ and there exist words in $L$ and $R$ with length of opposite parity whose product is $e$.
\item If $G=\langle L \rangle \langle R \rangle$ and there do not exist words in $L$ and $R$ with length of opposite parity whose product is $e$ then $\Gamma$ is disconnected with exactly two connected components.
\end{enumerate}
\end{theorem}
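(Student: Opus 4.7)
The plan is to reduce both parts to Theorem~\ref{theorem:main_connectedness_result} by exploiting how inverse-closure simplifies its hypotheses. Since $L$ and $R$ are inverse-closed, $\Gamma = 2\mathrm{S}(G;L,R)$ is undirected (as $h = l^{-1} g r$ iff $g = (l^{-1})^{-1} h (r^{-1})$ with $l^{-1} \in L$, $r^{-1} \in R$), so connectedness coincides with strong connectedness. Moreover, for any $l \in L$, $e = l l^{-1} \in \mathcal{W}(L)$, so $\mathcal{W}(L) = \mathcal{W}(L^{-1}) = \langle L \rangle$, and similarly for $R$. Thus the first clause of Theorem~\ref{theorem:main_connectedness_result} becomes exactly $G = \langle L \rangle \langle R \rangle$.

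For part (1) the forward direction is immediate: Theorem~\ref{theorem:main_connectedness_result} yields $G = \langle L \rangle \langle R \rangle$, and $e = w_{L,i+1} w_{R,i}$ exhibits words of opposite parity. For the reverse, given $G = \langle L \rangle \langle R \rangle$ and any opposite-parity factorization $e = uv$ with $|u| = a$, $|v| = b$ (WLOG $a > b$, so $a - b$ is odd and positive), I would insert $(a - b - 1)/2$ copies of the even identity $r_0 r_0^{-1}$ at the end of $v$, producing an $R$-word of length $a - 1$ and hence the type-A relation $e = w_{L,a} w_{R,a-1}$; inserting one additional copy yields the type-B relation $e = w_{L,a} w_{R,a+1}$. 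Both relations are exactly what Theorem~\ref{theorem:main_connectedness_result} requires, and its conclusion gives connectedness. The case $a < b$ is handled symmetrically by padding $u$ with $l_0 l_0^{-1}$.

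For part (2), assume $G = \langle L \rangle \langle R \rangle$ but no opposite-parity factorization of $e$ exists. I would define a parity function $p : G \to \mathbb{Z}/2\mathbb{Z}$ by $p(g) = (a + b) \bmod 2$ whenever $g = w_{L,a} w_{R,b}$. This is well-defined because two such factorizations can be rearranged using inverse-closure into a single identity $e = w_{L, a_1 + a_2} w_{R, b_1 + b_2}$, which the hypothesis forces to have even total length, yielding $a_1 + b_1 \equiv a_2 + b_2 \pmod{2}$. Every edge $g \to l^{-1} g r$ increases both $a$ and $b$ by one and thus preserves $p$; with $p(e) = 0$ but $p(l) = 1$ for any $l \in L$, there are at least two connected components.

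To show there are exactly two, I would prove that the component $C_e$ of $e$ equals $\{g : p(g) = 0\}$ and the component $C_{l_0}$ of a fixed $l_0 \in L$ equals $\{g : p(g) = 1\}$. For $p(g) = 0$, ordinary padding with $l l^{-1}$ or $r r^{-1}$ equalizes the lengths in $g = w_{L,a} w_{R,b}$ to $g = w_{L,m} w_{R,m}$, yielding a walk $e \to g$. For $p(g) = 1$ the critical move is first to insert $e = l_0^{-1} l_0$ between the halves, obtaining $g = (w_{L,a} l_0^{-1}) \cdot l_0 \cdot w_{R,b}$ with $l_0$ isolated in the middle; then $ll^{-1}$ and $rr^{-1}$ padding equalizes the outer lengths to produce $g = w_{L,m} l_0 w_{R,m}$, exactly the form of the endpoint of a length-$m$ walk from $l_0$. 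The main obstacle is this central-positioning step, since walks from $l_0$ necessarily land in $L^m l_0 R^m \subsetneq L^{m+1} R^m$, and the $l_0^{-1} l_0$ insertion (rather than the more obvious $l_0 l_0^{-1}$) is precisely the manipulation that forces $l_0$ into the middle of the decomposition.
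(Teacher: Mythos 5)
Your proposal is correct. Note first that the paper does not prove this statement --- it is quoted from Iradmusa--Praeger as background --- so there is no in-paper proof to compare against; what you have written is a legitimate independent derivation. Your reduction of part (1) to Theorem~\ref{theorem:main_connectedness_result} is exactly right: inverse-closure makes $\Gamma$ undirected, gives $\mathcal{W}(L)=\mathcal{W}(L^{-1})=\langle L\rangle$ (and likewise for $R$), and your even-length padding by $r_0r_0^{-1}$ or $l_0l_0^{-1}$ correctly produces both required identities $e=w_{L^{-1},i+1}w_{R,i}$ and $e=w_{L^{-1},j}w_{R,j+1}$ from a single opposite-parity factorization. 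For part (2), your parity function $p$ is well-defined for precisely the reason you give (concatenating two factorizations of $g$ into one factorization of $e$, whose total length must be even by hypothesis), it is constant on components, and the $l_0^{-1}l_0$ insertion followed by outer padding correctly places every odd-parity vertex in the component of $l_0$. This is essentially the specialization of the machinery the paper develops later: in the language of Theorems~\ref{theorem:number_weak_components} and~\ref{theorem:number_strong_components}, inverse-closure forces the minimum weak connection length $k$ to be $1$ or $2$ (since $e=ll^{-1}$ is always a length-$2$ word weakly connected to $e$, and a length-$1$ witness is exactly an opposite-parity factorization of $e$), so the component count is $1$ or $2$; your two-coloring argument is the elementary, self-contained version of that count, while the paper's route buys the more general statement for arbitrary $k$ and non-inverse-closed sets. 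The only point worth tightening is a bookkeeping one: the paper's Definition~\ref{def:word} requires words to have positive length, whereas the quoted theorem's phrase ``words in $L$ and $R$'' may admit the empty word; since an empty word can be traded for $ll^{-1}$ without changing parity, this does not affect your argument, but it deserves a sentence.
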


Theorems~\ref{theorem:number_weak_components} and~\ref{theorem:number_strong_components} further generalize~\cite[Theorem~1.8]{IrdamusaPraeger2016} by providing more general counts and characterizations of connected components.  Theorem~\ref{theorem:number_weak_components} also answers~\cite[Problem~3]{IrdamusaPraeger2016}, by showing there cannot exist $G$, $L$, and $R$ satisfying the hypotheses of Theorem~\ref{theorem:IPtheorem} such that $G=\langle L \rangle \langle R \rangle$ but $2\mathrm{S}(G;L,R)$ has connected components of different sizes.
We show more generally that if $G=\mathcal{W}(L \cup L^{-1})\mathcal{W}(R \cup R^{-1})$ then all connected components of $2\mathrm{S}(G;L,R)$ have the same size.

%%%%%%%%%%%%%%%%%%%%%%%%%%%%%%%%%%%%%%%%%%
%            Connectedness               %
%%%%%%%%%%%%%%%%%%%%%%%%%%%%%%%%%%%%%%%%%%

\section{General connectedness results}

\subsection{Connection length}\label{sec:connection_length}

In this section we lay the foundation for studying both weakly and strongly connected components of $2\mathrm{S}(G;L,R)$ in Section~\ref{sec:connected_components}.

\begin{definition} \label{def:weak_connectedness}
In a digraph, a vertex $g$ is \textit{weakly connected} to vertex $h$ if there is a path $g_0, g_1, \cdots, g_n$ such that $g=g_0$, $h=g_n$, and  either $(g_{i-1}, g_i)$ or $(g_i, g_{i-1})$ is an arc of the digraph.  A digraph is \textit{weakly connected} if each pair of its vertices is weakly connected. 
\end{definition}

If $L$ and $R$ are nonempty subsets of a group $G$, we let $\bar{L}=L\cup L^{-1}$ and $\bar{R}=R\cup R^{-1}$ and use $w_{\bar{L},m,a}$ to denote a word that contains $m$ factors from $L$ and $a$ factors from $L^{-1}$ in any order.
The notation $g \sim h$ will mean $g$ is weakly connected to $h$ in $2\mathrm{S}(G;L,R)$, or equivalently  $h=W_{\bar{L},m,a}gW_{\bar{R},a,m}$, where the capital $W$ indicates that the corresponding factors on either side of $g$ have opposite signs, i.e., one factor from $L^{-1}$ and one from $R$, or alternatively, one from $L$ and one from $R^{-1}$.  If computations lead to factorizations that may not involve opposite signs on corresponding factors then $W$ is changed to $w$.

We begin with two key results which will allow us to define minimum weak connection length in Definition~\ref{def:minweakconnlengthG} and which will also be used in the proof of Theorem~\ref{theorem:number_weak_components}.

\begin{lemma}
\label{lem:weak_word_manipulation}
In $2\mathrm{S}(G;L,R)$ if $g=w_{\bar{L},m,a}w_{\bar{R},n,b}$, then $g \sim l^d$ and $g \sim r^d$ where $l$ is any element of $L$, $r$ is any element of $R$, and $d=m+n-(a+b)$.
\end{lemma}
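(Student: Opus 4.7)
The key observation is that every arc of $2\mathrm{S}(G;L,R)$---forwards or backwards---realizes an elementary weak-connection step $g \sim l^{\epsilon} g\, r^{-\epsilon}$ for some $l\in L$, $r\in R$, and sign $\epsilon\in\{+1,-1\}$; the required opposite-sign pairing of the two multipliers is exactly what makes this a single arc. I will construct an explicit weak walk from $g$ to $l^d$ by two peeling passes that apply this step repeatedly.

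\emph{First (left-peeling) pass.} Write the $\bar L$-word of $g$ as $u_1 u_2 \cdots u_{m+a}$, where each $u_i = l_i^{\epsilon_i}$ with $l_i \in L$ and $\epsilon_i \in \{+1,-1\}$, and fix any $r \in R$ (the element from the lemma's statement). For $i=1,2,\ldots,m+a$ in order, apply the elementary move with left-multiplier $l_i^{-\epsilon_i}$ and right-multiplier $r^{\epsilon_i}$: this is a single arc (opposite signs), and it cancels $u_i$ on the left while appending $r^{\epsilon_i}$ on the right. Since $\sum_i \epsilon_i = m-a$, after all $m+a$ peels the element becomes $w_{\bar R, n, b}\cdot r^{m-a}$.

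\emph{Second (right-peeling) pass.} The intermediate element now has a pure $\bar R$-factorization of length $n+b+m+a$ with $n+m$ positive and $a+b$ negative letters. Fix any $l \in L$ and iterate the symmetric move, cancelling the rightmost $\bar R$-letter and prepending $l^{\pm 1}$ (sign matching the cancelled letter) to a growing left word. Each such peel is a single arc, and after all $n+b+m+a$ iterations the element equals $l^{(n+m)-(a+b)} = l^d$. Chaining the two passes yields $g \sim l^d$. The companion statement $g \sim r^d$ follows by running the same construction with the passes reversed: first right-peel every $\bar R$-letter of $g$ using the fixed $l$, then left-peel the resulting pure $\bar L$-element using the fixed $r$.

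The only nontrivial ingredient is the sign bookkeeping: one must verify that each proposed peel is realized by a single arc (which is exactly the opposite-sign condition on multipliers) and that the accumulated exponents really collapse to $d$ at the end. Both facts rest on the observation that a forward arc decreases the net $\bar L$-exponent by $1$ and increases the net $\bar R$-exponent by $1$ (and vice versa for a backward arc), so the quantity $d=m+n-(a+b)$ is invariant under arc traversal; this ensures the two passes cannot land anywhere except the correct power of $l$ (or $r$).
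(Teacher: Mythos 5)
Your proposal is correct and is essentially the paper's own argument: the paper likewise first cancels the $\bar{L}$-word against an inserted $r^{m-a}r^{a-m}$ to reach $w_{\bar{R},m+n,a+b}$, then cancels that against an inserted $l^{-d}l^{d}$, which is exactly your two peeling passes written as one algebraic identity $g=W_{\bar{L},m,a}W_{\bar{L},a+b,m+n}\,l^{d}\,W_{\bar{R},m+n,a+b}W_{\bar{R},a,m}$. Your arc-by-arc unrolling just makes explicit the opposite-sign check that the paper dispatches with the remark that the repeated $r^{\pm1}$ and $l^{\pm1}$ can be rearranged as needed.
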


\begin{proof}
Let $g=w_{\bar{L},m,a}w_{\bar{R},n,b}$ for $a, b, m, n \in \mathbf{N}$. Then we have for any $r \in R$ and $l \in L$,
\begin{align*}
g &= w_{\bar{L},m,a}w_{\bar{R},n,b} \\
  &= w_{\bar{L},m,a}w_{\bar{R},n,b}r^{m-a}r^{-m+a}\\
  &= W_{\bar{L},m,a}w_{\bar{R},m+n,a+b} W_{\bar{R},a,m} \\
  &= W_{\bar{L},m,a}l^{a+b-(m+n)}l^{m+n-(a+b)}w_{\bar{R},m+n,a+b} W_{\bar{R},a,m} \\
  &=W_{\bar{L},m,a}W_{\bar{L},a+b,m+n}l^{m+n-(a+b)}W_{\bar{R},m+n,a+b}W_{\bar{R},a,m}.
 \end{align*}
Corresponding factors can be adjusted to have opposite signs because the repeated $r$ and $r^{-1}$ and $l$ and $l^{-1}$ can be rearranged as needed. A similar construction will yield $g \sim r^d$. 
\end{proof}

The following corollary is stated in terms of $L$, but an analogous statement in terms of $R$ also holds.

\begin{corollary}
\label{cor:weak_connection_properties}
Let $L$ and $R$ be nonempty subsets of a group $G$.
\begin{enumerate}
\item In $2\mathrm{S}(G;L,R)$ there exist two words in $L$ of different lengths that are weakly connected if and only if there is a word in $L$ that is weakly connected to $e$. 
\item In $2\mathrm{S}(G;L,R)$ there exists a word $w_{L,n}$ weakly connected to $e$ if and only if there exists a word $w_{L^{-1},n}$ weakly connected to $e$.
\end{enumerate}
\end{corollary}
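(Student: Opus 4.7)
The plan is to use Lemma~\ref{lem:weak_word_manipulation} as the main tool, observing that any word $w_{L,n}$, viewed as the factorization $w_{\bar{L},n,0}\cdot w_{\bar{R},0,0}$ (padded by $r\cdot r^{-1}$ if strict nonemptiness of the $\bar{R}$-part is required), satisfies $w_{L,n}\sim l^n$ for any fixed $l\in L$. Thus all words in $L$ of a common length $n$ are weakly connected to the common element $l^n$, and both parts of the corollary reduce to manipulating exponents inside the cyclic family $\{l^k : k\in\ZZ\}$.

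For part (1) backward direction, I would start from $w_{L,n}\sim e$, deduce $l^n\sim e$, and unpack this as an equation $e=W_{\bar{L},m,a}\,l^n\,W_{\bar{R},a,m}$ coming from a weak path. Rearranging gives $l^n=w_{\bar{L},a,m}\,w_{\bar{R},m,a}$, and concatenating an additional $l^n$ on the left produces the factorization $l^{2n}=w_{\bar{L},n+a,m}\,w_{\bar{R},m,a}$. Lemma~\ref{lem:weak_word_manipulation} applied to this factorization yields $l^{2n}\sim l^{(n+a)+m-(m+a)}=l^n$, exhibiting two words in $L$ of different lengths (since $n\geq 1$) that are weakly connected. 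For the forward direction, $w_{L,n_1}\sim w_{L,n_2}$ with $n_1>n_2$ gives $l^{n_1}\sim l^{n_2}$; unpacking the relation and left-multiplying by $l^{-n_2}$ absorbs the exponent into the $\bar{L}$-factor, producing $e=w_{\bar{L},m+n_1,a+n_2}\,W_{\bar{R},a,m}$, and a second invocation of Lemma~\ref{lem:weak_word_manipulation} gives $e\sim l^{n_1-n_2}$, so $l^{n_1-n_2}$ is a word in $L$ of positive length connected to $e$.

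For part (2), both directions reduce by the same trick to the equivalence $l^n\sim e\Leftrightarrow l^{-n}\sim e$. Starting from $e=W_{\bar{L},m,a}\,l^n\,W_{\bar{R},a,m}$ and left-multiplying by $l^{-n}$, the outer $l^{-n}$ absorbs $n$ new $L^{-1}$-factors into the $\bar{L}$-side and the interior $l^n$ absorbs $n$ new $L$-factors, giving $l^{-n}=w_{\bar{L},m+n,a+n}\,W_{\bar{R},a,m}$. Lemma~\ref{lem:weak_word_manipulation} then forces $l^{-n}\sim l^0=e$; the reverse direction is symmetric, starting from a relation for $l^{-n}$ and multiplying on the left by $l^n$.

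The main obstacle throughout is careful bookkeeping of the counts of $L$- and $L^{-1}$-factors after each left-multiplication, inversion, and concatenation, together with the distinction between the $W$-notation (which encodes the opposite-sign correspondence required for weak paths) and the plain $w$-notation used as input to Lemma~\ref{lem:weak_word_manipulation}. The key observation that removes the obstacle is that although the algebraic rearrangements generically destroy the $W$-structure, Lemma~\ref{lem:weak_word_manipulation} only needs a plain $w_{\bar{L}}\cdot w_{\bar{R}}$ factorization of the target element and so continues to apply throughout.
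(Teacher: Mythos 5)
Your proof is correct and follows essentially the same route as the paper: both arguments consist of applying Lemma~\ref{lem:weak_word_manipulation} to a rearranged factorization and tracking the signed count of $L$- versus $L^{-1}$-factors, and your preliminary reduction of every word to a power of a fixed $l\in L$ is only a minor organizational variant of the paper's direct manipulations (e.g., the paper obtains $e\sim(l^{-1})^n$ in part (2) by inverting the whole relation rather than left-multiplying by $l^{-n}$). The exponent bookkeeping in each of your three computations checks out, and your handling of the empty $\bar{R}$-word by padding with $r\,r^{-1}$ is the right fix.
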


\begin{proof}
For (1), if $w_{L,m} \sim w_{L,n}$, assume without loss of generality that $m<n$, left multiply by $w_{L,m}^{-1}$, and apply Lemma~\ref{lem:weak_word_manipulation} to obtain $e \sim l^{n-m}$ for $l \in L$. Conversely if $e \sim w_{L,m}$, left multiply by some $l \in L$ and apply Lemma~\ref{lem:weak_word_manipulation}. 

For (2), if $e = W_{\bar{L},m,a} w_{L,n} W_{\bar{R},a,m}$ then $e =w_{L,n}^{-1} w_{\bar{L},a,m}w_{\bar{R},m,a}$. Now apply Lemma~\ref{lem:weak_word_manipulation} to obtain $e \sim (l^{-1})^n$ for $l \in L$. The converse is achieved analogously. 
\end{proof}

These results yield that the following notion is well-defined.

\begin{definition}
\label{def:minweakconnlengthG}
The \textit{minimum weak connection length in $G$ relative to $(L,R)$} is the minimum length $k$ of a word purely in $L$, $L^{-1}$, $R$, or $R^{-1}$ that is weakly connected to $e$, and is infinite if there is no such minimum. 
Algebraically this is equivalent to the minimum length of a word $w$ purely in $L$, $L^{-1}$, $R$, or $R^{-1}$ such that $e=W_{\bar{L},m,a}wW_{\bar{R},a,m}$ for some $a,m \in \mathbb{N}$.
\end{definition}

Here and in the next section we impose the additional assumption that the set of words in $L$ and the set of words in $R$ are subgroups of $G$ in order to adapt weak connectedness results to the case of strong connectedness using Proposition~\ref{lem:two_way_connectedness} and Corollary~\ref{cor:remark1}. 
The following proposition provides two further means of verifying that sets of words are subgroups.

\begin{proposition} \label{prop:equiv_hypotheses} 
Given any nonempty subset $S$ of a group $G$ the following are equivalent:
\begin{enumerate}
\item $\mathcal{W}(S)$ is a subgroup of $G$,
\item $\mathcal{W}(S) = \mathcal{W}(S^{-1})$,
\item $\mathcal{W}(S) = \langle S \rangle$.  
\end{enumerate}
\end{proposition}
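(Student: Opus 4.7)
My plan is to establish the cycle of implications $(3) \Rightarrow (1) \Rightarrow (2) \Rightarrow (3)$.  The implication $(3) \Rightarrow (1)$ is immediate since $\langle S \rangle$ is a subgroup of $G$ by definition, so if $\mathcal{W}(S)=\langle S \rangle$ it too is a subgroup.

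For $(1) \Rightarrow (2)$, I would argue by mutual inclusion.  Assuming $\mathcal{W}(S)$ is a subgroup, every $s \in S \subseteq \mathcal{W}(S)$ has $s^{-1} \in \mathcal{W}(S)$, so $s^{-1}$ may be written as a word $t_1 \cdots t_m$ in $S$; inverting yields $s = t_m^{-1} \cdots t_1^{-1} \in \mathcal{W}(S^{-1})$.  Since $\mathcal{W}(S^{-1})$ is closed under concatenation, any word in $S$ is a product of elements of $\mathcal{W}(S^{-1})$ and hence lies in $\mathcal{W}(S^{-1})$, giving $\mathcal{W}(S) \subseteq \mathcal{W}(S^{-1})$.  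The reverse inclusion is symmetric, since each $s^{-1}$ with $s \in S$ lies in $\mathcal{W}(S)$ and so every word in $S^{-1}$ is a product of elements of $\mathcal{W}(S)$.

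For $(2) \Rightarrow (3)$, the containment $\mathcal{W}(S) \subseteq \langle S \rangle$ is clear, and the reverse requires two ingredients.  First, I need $e \in \mathcal{W}(S)$: given any $s \in S$, hypothesis $(2)$ provides a word $t_1 \cdots t_m$ in $S$ equal to $s^{-1}$, so $e = s t_1 \cdots t_m$ is itself a word in $S$.  Second, $(2)$ gives $S^{-1} \subseteq \mathcal{W}(S^{-1}) = \mathcal{W}(S)$, so $S \cup S^{-1} \subseteq \mathcal{W}(S)$.  Any nontrivial element of $\langle S \rangle$ is a product of elements of $S \cup S^{-1}$, and concatenating the corresponding words in $S$ expresses that product as a single word in $S$.

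The entire argument is a chase through definitions, so I expect no substantive obstacle; the one subtle point is confirming $e \in \mathcal{W}(S)$, which hinges on the positive-length convention in Definition~\ref{def:word} and uses hypothesis $(2)$ in an essential way.
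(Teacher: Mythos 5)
Your proof is correct and follows essentially the same cycle of implications $(3)\Rightarrow(1)\Rightarrow(2)\Rightarrow(3)$ as the paper, with the same inversion-of-words argument at the heart of $(1)\Rightarrow(2)$. The only difference is that you spell out $(2)\Rightarrow(3)$ --- in particular the point that $e\in\mathcal{W}(S)$ is not automatic under the positive-length convention and genuinely requires hypothesis $(2)$ --- whereas the paper dismisses that implication as clear; your added care there is a small improvement, not a divergence.
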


\begin{proof} 
Clearly $(2)$ implies $(3)$ and $(3)$ implies $(1)$ so it remains to show that $(1)$ implies $(2)$. Assume $\mathcal{W}(S)$ is a subgroup of $G$ and let $w \in \mathcal{W}(S)$. Then $w^{-1}\in \mathcal{W}(S)$ by assumption and $w^{-1}=s_1s_2\cdots s_k$ where $k > 0$. Hence $w=(s_1s_2\cdots s_k)^{-1}=s_k^{-1}s_{k-1}^{-1}\cdots s_1^{-1}\in\mathcal{W}(S^{-1})$.

Now suppose that $w\in\mathcal{W}(S^{-1})$.  Then $w=s_1^{-1}s_2^{-1}\cdots s_k^{-1}=(s_ks_{k-1}\cdots s_1)^{-1}\in\mathcal{W}(S)$ because $\mathcal{W}(S)$ is a subgroup of G. Hence $\mathcal{W}(S)=\mathcal{W}(S^{-1})$ and the result follows. 
\end{proof}

\begin{remark}
Notice that if $G$ is a finite group, any subset $S$ of $G$ will satisfy the statements in Proposition~\ref{prop:equiv_hypotheses}.  The statements will also hold in any group if the subset $S$ is inverse-closed, as is assumed in places in~\cite{IrdamusaPraeger2016}, or if all elements of $S$ have finite order.
\end{remark}

\begin{proposition}
\label{lem:two_way_connectedness} 
In the two-sided group digraph $2\mathrm{S}(G;L,R)$, if $\mathcal{W}(L)$ and $\mathcal{W}(R)$ are subgroups of $G$, there is a directed path from $g$ to $h$ if and only if there is a directed path from $h$ to $g$. 
\end{proposition}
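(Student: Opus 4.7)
The plan is to reduce the biconditional to a single-arc reversal and then attack the resulting length-matching problem using the subgroup structure. First, by symmetry of the statement in $g$ and $h$, only one implication needs explicit proof; and since reverses of individual arcs can be concatenated in reverse order to reverse any directed path, it suffices to show that every arc $g' \to h'$ admits a directed reverse path $h' \to g'$ in $2\mathrm{S}(G;L,R)$.

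Second, I would write such an arc as $h = l^{-1}gr$ with $l \in L$ and $r \in R$, so $g = l h r^{-1}$, and aim to realize $g$ in the form $w_{L^{-1},m} h w_{R,m}$. It suffices to produce expressions $l^{-1} = l_1 \cdots l_m$ and $r^{-1} = r_1 \cdots r_m$ with $l_i \in L$, $r_i \in R$, and a common length $m$; then $(l_1 \cdots l_m)^{-1} h (r_1 \cdots r_m) = l h r^{-1} = g$ is the desired reverse path. The subgroup hypothesis on $\mathcal{W}(L)$ immediately yields $l^{-1} \in \mathcal{W}(L)$, hence $l^{-1} = w_{L,n_1}$ for some $n_1 \geq 1$; symmetrically $r^{-1} = w_{R,n_2}$.

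The main obstacle is synchronizing $n_1$ and $n_2$. Since $e$ belongs to both $\mathcal{W}(L)$ and $\mathcal{W}(R)$, fix $p, q \geq 1$ with $e = w_{L,p}$ and $e = w_{R,q}$; inserting copies of these identity words produces $L$-representations of $l^{-1}$ of length $n_1 + kp$ and $R$-representations of $r^{-1}$ of length $n_2 + k'q$ for all $k,k' \geq 0$, and more generally shifts by arbitrary elements of $A_L := \{n \geq 1 : e = w_{L,n}\}$ and $A_R := \{n \geq 1 : e = w_{R,n}\}$. The heart of the argument will be showing the resulting Diophantine condition $n_1 + a = n_2 + b$ admits a solution with $a \in A_L \cup \{0\}$, $b \in A_R \cup \{0\}$. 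For this, I would observe that the map sending each $l \in L$ to $1 \in \mathbb{Z}/d_L\mathbb{Z}$ (where $d_L = \gcd A_L$) extends to a well-defined group homomorphism $\mathcal{W}(L) \to \mathbb{Z}/d_L\mathbb{Z}$, since every relation $e = w_{L,n}$ forces $d_L \mid n$, and rewriting each $L^{-1}$-letter in an arbitrary relation as a word in $L$ then shows the net length of any such relation is itself divisible by $d_L$. Applying this homomorphism to the relation $l \cdot l^{-1} = e$ forces $n_1 \equiv -1 \pmod{d_L}$, and symmetrically $n_2 \equiv -1 \pmod{d_R}$. Therefore $\gcd(d_L, d_R) \mid n_2 - n_1$, and a standard Sylvester--Frobenius plus CRT argument (using that $A_L$ and $A_R$ contain all sufficiently large multiples of $d_L$ and $d_R$ respectively) yields a common $m$ with $l^{-1} = w_{L,m}$ and $r^{-1} = w_{R,m}$, completing the reverse-path construction and hence the proof.
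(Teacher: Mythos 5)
Your proof is correct, but it follows a genuinely different route from the paper's. You reduce to reversing a single arc $h=l^{-1}gr$ and then solve a length-synchronization problem arithmetically: you introduce the sets $A_L, A_R$ of lengths of positive words representing $e$, observe that length modulo $d_L=\gcd A_L$ gives a well-defined homomorphism $\mathcal{W}(L)\to\mathbb{Z}/d_L\mathbb{Z}$ forcing $n_1\equiv -1 \pmod{d_L}$ and $n_2\equiv -1\pmod{d_R}$, and finish with CRT plus the fact that a numerical semigroup contains all large multiples of its gcd. All of these steps check out (in particular $A_L\neq\emptyset$ since $e=l\cdot w_{L,n_1}$, and the compatibility condition $\gcd(d_L,d_R)\mid n_2-n_1$ does hold), so the argument is complete. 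The paper instead inverts the whole path at once, writing $g=w_{L^{-1},n}^{-1}hw_{R,n}^{-1}$ with $w_{L^{-1},n}^{-1}=w_{L^{-1},a}$ and $w_{R,n}^{-1}=w_{R,b}$, and equalizes lengths in one stroke by raising the identity words $e=w_{L^{-1},n}w_{L^{-1},a}$ and $e=w_{R,n}w_{R,b}$ to the powers $n+b$ and $n+a$ respectively, so both sides land on the common length $(n+a)(n+b)-n$ with no number theory at all. That symmetric-product trick would also have closed your per-arc version directly (pad $l^{-1}=w_{L,n_1}$ and $r^{-1}=w_{R,n_2}$ to the common length $(n_1+1)(n_2+1)-1$ using $e=l\,w_{L,n_1}$ and $e=r\,w_{R,n_2}$), so your Sylvester--Frobenius/CRT machinery is heavier than necessary; on the other hand, your length-mod-$d_L$ homomorphism makes explicit the invariant that governs the component counts in the paper's later sections on minimum connection length, so it buys some structural insight beyond this proposition.
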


\begin{proof}
Suppose that there is a directed path from $g$ to $h$ in $2\mathrm{S}(G;L,R)$. Then we have $h = w_{L^{-1},n}gw_{R,n}$ for some $n \in \mathbb{N}$ which implies that $g = w_{L^{-1},n}^{-1} h w_{R,n}^{-1}$. 

Since $\mathcal{W}(L^{-1})$ and $\mathcal{W}(R)$ are both subgroups of $G$, $w_{L^{-1},n}^{-1} \in \mathcal{W}(L^{-1})$ and $w_{R,n}^{-1} \in \mathcal{W}(R)$, i.e., inverses can be expressed as words in the original set. It will be sufficient to show that both of the inverses can be expressed as words in their respective sets with the same length.

First suppose that $w_{L^{-1},n}^{-1} = w_{L^{-1},a}$ and $w_{R,n}^{-1} = w_{R,b}$ for some $a,b \in \mathbb{N}$. Then we have $e = w_{L^{-1},n}w_{L^{-1},a}$ and similarly $e = w_{R,n}w_{R,b}$ of total lengths at least one. Using that $e$ is the identity, we have 
\begin{align*}
e &= w_{L^{-1},n}w_{L^{-1},a}(w_{L^{-1},n}w_{L^{-1},a})^{n+b-1} \\
&= w_{L^{-1},n}w_{L^{-1},a}w_{L^{-1},(n+a)(n+b-1)} \\
&= w_{L^{-1},n}w_{L^{-1},a+(n+a)(n+b-1)}. 
\end{align*}
This shows that $w_{L^{-1},n}^{-1}$ can be expressed as a word in $L^{-1}$ of length $(n+a)(n+b)-n$. Similarly, we can express $w_{R,n}^{-1}$ as a word in $R$ of the same length.  Therefore there is a directed path from $h$ to $g$ in $2\mathrm{S}(G;L,R)$.
\end{proof}

\begin{corollary}\label{cor:remark1}
In the two-sided group digraph $2\mathrm{S}(G;L,R)$, if $\mathcal{W}(L)$ and $\mathcal{W}(R)$ are subgroups of $G$, $g \in G$ is weakly connected to $h\in G$ if and only if $g$ is strongly connected to $h$ and hence weakly connected components are identical to strongly connected components. 
\end{corollary}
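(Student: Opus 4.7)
The plan is to observe that one direction is trivial: strong connectedness always implies weak connectedness since every directed path is, in particular, an undirected walk. So the real content is the reverse implication, which is essentially a direct consequence of Proposition~\ref{lem:two_way_connectedness}.

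Assume $g$ is weakly connected to $h$. By Definition~\ref{def:weak_connectedness}, there is a sequence of vertices $g = g_0, g_1, \ldots, g_n = h$ such that for each $i \in \{1,\ldots,n\}$, either $(g_{i-1}, g_i)$ or $(g_i, g_{i-1})$ is an arc of $2\mathrm{S}(G;L,R)$. In either case we have a directed path between $g_{i-1}$ and $g_i$ in at least one direction; applying Proposition~\ref{lem:two_way_connectedness} (which requires precisely the hypothesis that $\mathcal{W}(L)$ and $\mathcal{W}(R)$ are subgroups of $G$) yields a directed path in the opposite direction as well. Concatenating one chosen directed path from $g_{i-1}$ to $g_i$ for each $i$ produces a directed path from $g$ to $h$, and concatenating the reversed directed paths from $g_i$ to $g_{i-1}$ produces a directed path from $h$ to $g$. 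By Definition~\ref{def:strong_connectedness}, $g$ and $h$ are strongly connected.

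For the statement about components, note that we have shown the weak and strong connectedness relations on $G$ coincide under the standing hypothesis; since each of these relations partitions $G$ into equivalence classes that are exactly the weakly (respectively strongly) connected components, the two partitions are identical. No obstacle is anticipated: the proof is essentially a bookkeeping argument that lifts Proposition~\ref{lem:two_way_connectedness} from single arcs to arbitrary walks, and the only subtlety is remembering to invoke that proposition at each step of the weak path rather than only once.
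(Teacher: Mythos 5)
Your proof is correct and follows essentially the same approach as the paper's: take the weak path, use Proposition~\ref{lem:two_way_connectedness} to replace each wrong-way arc by a directed path, concatenate, and then obtain the return path (the paper applies the proposition once more to the whole forward path, while you concatenate the step-by-step reversals, but these are the same idea). The trivial direction and the passage from the pointwise equivalence to the equality of components are handled appropriately.
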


\begin{proof}
Assume that $g$ is weakly connected to $h$ in $2\mathrm{S}(G;L,R)$.  Then there exists a path $g_0, g_1, \cdots, g_n$ with $g=g_0$ and $h=g_n$ such that either $(g_{i-1}, g_i)$ or $(g_i, g_{i-1})$ is an arc for $1\leq i\leq n$. For every arc of the form $(g_i,g_{i-1})$, apply Proposition~\ref{lem:two_way_connectedness}. This generates a new directed path $g'_0, g'_1, \cdots, g'_m$ with $g=g'_0$ and $h=g'_m$ such that $(g'_{i-1}, g'_i)$ is an arc for $1\leq i\leq m$. Applying Proposition~\ref{lem:two_way_connectedness} again yields that g is strongly connected to h.
\end{proof}

Under the hypothesis that $\mathcal{W}(L)$ and $\mathcal{W}(R)$ are subgroups of $G$, Proposition~\ref{lem:two_way_connectedness} and Corollary~\ref{cor:remark1} allow us to convert any statement about weak connectedness into a corresponding statement about strong connectedness.  This leads to the following results analogous to Lemma~\ref{lem:weak_word_manipulation} and Corollary~\ref{cor:weak_connection_properties} and consequently a well-defined notion of minimum strong connection length.

\begin{lemma}
\label{lem:strong_word_manipulation}
In $2\mathrm{S}(G;L,R)$ if $\mathcal{W}(L)$ and $\mathcal{W}(R)$ are subgroups of $G$ and $g=w_{L^{-1},a}w_{R,n}$, then $g$ is strongly connected to  $l^d$ and to $r^d$ where $l$ is any element of $L$, $r$ is any element of $R$, and $d=n-a$.
\end{lemma}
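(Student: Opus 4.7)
The plan is to deduce this statement as an immediate specialization of Lemma~\ref{lem:weak_word_manipulation} followed by an appeal to Corollary~\ref{cor:remark1}. The key observation is that a pure word $w_{L^{-1},a}$ fits the notation $w_{\bar L, m, a}$ with $m = 0$, and similarly $w_{R,n} = w_{\bar R, n, 0}$. Thus the hypothesis $g = w_{L^{-1},a} w_{R,n}$ is precisely the hypothesis of Lemma~\ref{lem:weak_word_manipulation} with $m = 0$ and $b = 0$.

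With this identification in hand, the first step is to invoke Lemma~\ref{lem:weak_word_manipulation} to conclude that $g \sim l^d$ and $g \sim r^d$ for any $l \in L$ and $r \in R$, where the exponent produced by that lemma is $d = m + n - (a+b) = 0 + n - (a + 0) = n - a$. This matches the value of $d$ in the statement we are trying to prove. The second step is to upgrade this weak connection to a strong connection: by hypothesis $\mathcal{W}(L)$ and $\mathcal{W}(R)$ are subgroups of $G$, so Corollary~\ref{cor:remark1} applies and tells us that weak and strong connectedness coincide in $2\mathrm{S}(G;L,R)$. Therefore $g$ is strongly connected to $l^d$ and to $r^d$, as claimed.

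There is no substantive obstacle; the entire proof is a bookkeeping exercise in matching the indices of the general form $w_{\bar L, m, a} w_{\bar R, n, b}$ to the pure form $w_{L^{-1}, a} w_{R, n}$. The one point I would flag when writing it out is that the exponent $d = n - a$ may be negative, so $l^d$ should be read as $l^{-|d|}$ when $n < a$; this causes no trouble because Lemma~\ref{lem:weak_word_manipulation} already accommodates arbitrary sign of $m + n - (a+b)$ through the balanced insertions of $r^{m-a}r^{-(m-a)}$ and $l^{a+b-(m+n)}l^{m+n-(a+b)}$ in its proof.
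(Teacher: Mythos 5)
Your proof is correct and matches the paper's intent exactly: the paper states this lemma without a separate proof, explicitly noting just before it that Proposition~\ref{lem:two_way_connectedness} and Corollary~\ref{cor:remark1} convert the weak-connectedness statement of Lemma~\ref{lem:weak_word_manipulation} into its strong counterpart. Your index bookkeeping ($m=b=0$, so $d=n-a$) is the right specialization and the remark about possibly negative $d$ is a sensible clarification.
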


\begin{corollary}
\label{cor:strong_connection_properties}
Let $\mathcal{W}(L)$ and $\mathcal{W}(R)$ be subgroups of $G$.
\begin{enumerate}
\item In $2\mathrm{S}(G;L,R)$ there exist two words in $L$ of different lengths that are strongly connected if and only if there is a word in $L$ that is strongly connected to $e$. 
\item In $2\mathrm{S}(G;L,R)$ there exists a word $w_{L,n}$ strongly connected to $e$ if and only if there exists a word $w_{L^{-1},n}$ strongly connected to $e$.
\end{enumerate}
\end{corollary}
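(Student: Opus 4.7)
The plan is to reduce Corollary~\ref{cor:strong_connection_properties} to Corollary~\ref{cor:weak_connection_properties} by exploiting the equivalence between weak and strong connectedness provided by Corollary~\ref{cor:remark1}. Under the standing hypothesis that $\mathcal{W}(L)$ and $\mathcal{W}(R)$ are subgroups of $G$, being weakly connected and being strongly connected coincide, so any biconditional of the form ``$X$ is weakly connected to $Y$ iff $X'$ is weakly connected to $Y'$'' can be translated verbatim into the corresponding statement with ``strongly'' in place of ``weakly'' by applying Corollary~\ref{cor:remark1} in both directions.

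For part (1), I would start with two words $w_{L,m}$ and $w_{L,n}$ in $L$ of differing lengths that are strongly connected. By Corollary~\ref{cor:remark1} they are also weakly connected, and Corollary~\ref{cor:weak_connection_properties}(1) then delivers a word in $L$ weakly connected to the identity $e$; a second application of Corollary~\ref{cor:remark1} promotes this to a strong connection to $e$. The reverse implication is symmetric: a word in $L$ strongly connected to $e$ is weakly connected to $e$, so Corollary~\ref{cor:weak_connection_properties}(1) provides two words in $L$ of different lengths that are weakly (hence strongly) connected.

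For part (2), the same two-step translation applies: convert ``$w_{L,n}$ strongly connected to $e$'' to ``$w_{L,n}$ weakly connected to $e$'' via Corollary~\ref{cor:remark1}, invoke Corollary~\ref{cor:weak_connection_properties}(2) to obtain a $w_{L^{-1},n}$ weakly connected to $e$, and then convert back to strong connectedness.

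There is no serious obstacle, since the substantive work has already been carried out in Proposition~\ref{lem:two_way_connectedness} and Corollary~\ref{cor:remark1}; the only point requiring attention is that the subgroup hypothesis on $\mathcal{W}(L)$ and $\mathcal{W}(R)$ is precisely what licenses the bidirectional use of Corollary~\ref{cor:remark1}. As an alternative route, one could bypass Corollary~\ref{cor:weak_connection_properties} altogether and repeat its proof line by line, replacing each appeal to Lemma~\ref{lem:weak_word_manipulation} with an appeal to Lemma~\ref{lem:strong_word_manipulation}; this gives a self-contained proof but is essentially redundant given the equivalence.
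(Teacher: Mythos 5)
Your proposal is correct and matches the paper's approach: the paper gives no separate proof of this corollary, stating only that Proposition~\ref{lem:two_way_connectedness} and Corollary~\ref{cor:remark1} convert the weak-connectedness statements of Corollary~\ref{cor:weak_connection_properties} into their strong counterparts, which is exactly your translation argument. The only minor observation is that one direction of each conversion (strong implies weak) is automatic and does not even need Corollary~\ref{cor:remark1}; the subgroup hypothesis is only needed for the reverse direction.
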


\begin{definition}
Assuming that $\mathcal{W}(L)$ and $\mathcal{W}(R)$ are subgroups of $G$, the \textit{minimum strong connection length in $G$ relative to $(L,R)$} is the minimum length $k$ of a word purely in $L$, $L^{-1}$, $R$, or $R^{-1}$ that is strongly connected to $e$, and is infinite if there is no such minimum. 
Algebraically this is equivalent to the minimum length of a word $v$ purely in $L$, $L^{-1}$, $R$, or $R^{-1}$ such that $e=w_{L^{-1},n}vw_{R,n}$ for some $n \in \mathbb{N}$.
\end{definition}

Lemma~\ref{lem:strong_word_manipulation} and Corollary~\ref{cor:strong_connection_properties_coset} also lead to the following version of Theorem~\ref{theorem:main_connectedness_result}.

\begin{corollary}
Let $\mathcal{W}(L)$ and $\mathcal{W}(R)$ be subgroups of $G$.  The two-sided group digraph $2\mathrm{S}(G;L,R)$ is strongly connected if and only if $G = \langle L\rangle \langle R\rangle$ and $e = w_{L^{-1},i}w_{R,j}$ where $|i-j| = 1$.
\end{corollary}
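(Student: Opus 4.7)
The plan is to deduce this corollary from Theorem~\ref{theorem:main_connectedness_result} by leveraging the subgroup hypothesis to simplify both of its conditions. First I would apply Proposition~\ref{prop:equiv_hypotheses} to note that $\mathcal{W}(L^{-1}) = \mathcal{W}(L) = \langle L\rangle$ and $\mathcal{W}(R^{-1}) = \mathcal{W}(R) = \langle R\rangle$. This immediately collapses the factoring condition $G = \mathcal{W}(L^{-1})\mathcal{W}(R) = \mathcal{W}(L)\mathcal{W}(R^{-1})$ of Theorem~\ref{theorem:main_connectedness_result} into the single condition $G = \langle L\rangle\langle R\rangle$ required by the corollary.

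The forward direction then follows essentially by inspection: if $2\mathrm{S}(G;L,R)$ is strongly connected, Theorem~\ref{theorem:main_connectedness_result} supplies some $i \in \mathbb{N}$ with $e = w_{L^{-1},i+1}w_{R,i}$, and the two length indices $i+1$ and $i$ differ by exactly $1$, giving the desired form.

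The main work is the backward direction, which requires bootstrapping from the single identity $e = w_{L^{-1},i}w_{R,j}$ with $|i-j|=1$ to \emph{both} identities $e = w_{L^{-1},p+1}w_{R,p}$ and $e = w_{L^{-1},q}w_{R,q+1}$ demanded by Theorem~\ref{theorem:main_connectedness_result}. I would split into the symmetric cases $i=j+1$ and $j=i+1$, and focus on the first. Reading $e = w_{L^{-1},j+1}w_{R,j}$ as the value of $g = e$ in the form $w_{L^{-1},a}w_{R,n}$ with $a=j+1$ and $n=j$, Lemma~\ref{lem:strong_word_manipulation} yields that $e$ is strongly connected to $r^{n-a}=r^{-1}$ for any $r\in R$. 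Proposition~\ref{lem:two_way_connectedness} then produces a directed path from $e$ to $r^{-1}$, i.e., $r^{-1} = w_{L^{-1},n}w_{R,n}$ for some $n$, and right-multiplying by $r\in R$ absorbs it into the $R$-word to give $e = w_{L^{-1},n}w_{R,n+1}$, the second required identity. The case $j = i+1$ is handled symmetrically, using $e\sim l$ for $l\in L$ and left-multiplying by $l^{-1}$ to absorb into the $L^{-1}$-word.

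The main obstacle to anticipate is the sign-matching issue in the final absorption step: an element of $L$ cannot be appended directly to a word purely in $L^{-1}$, and dually on the $R$ side, so one must multiply on the correct side by an element of matching sign. The choice to use $r$ in the first case and $l^{-1}$ in the second is forced by exactly this constraint. Once both identities of Theorem~\ref{theorem:main_connectedness_result} are in hand, that theorem applies and yields strong connectedness of $2\mathrm{S}(G;L,R)$.
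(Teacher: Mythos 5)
Your proof is correct and follows essentially the route the paper intends: the paper gives no explicit argument, merely noting that the corollary follows from Lemma~\ref{lem:strong_word_manipulation} together with the connection-length machinery, and your reduction to Theorem~\ref{theorem:main_connectedness_result} via Proposition~\ref{prop:equiv_hypotheses} and the sign-matched absorption of a single $l^{-1}$ or $r$ is exactly that derivation carried out in detail. The only point worth a passing remark is the degenerate case $r^{-1}=e$ (resp.\ $l=e$), where the strong-connectivity statement is vacuous; there one instead appends $e\in R$ (resp.\ prepends $e^{-1}\in L^{-1}$) directly to the given identity, so nothing breaks.
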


%%%%%%%%%%%%%%%%%%%%%%%%%%%%%%%%%%%%%%%%%%
%               Components               %
%%%%%%%%%%%%%%%%%%%%%%%%%%%%%%%%%%%%%%%%%%

\subsection{Connected components}
\label{sec:connected_components}

In this section we count numbers of connected components and characterize their vertices, assuming that elements of $G$ factor as a word in $\bar{L}=L\cup L^{-1}$ times a word in $\bar{R}=R\cup R^{-1}$.

\begin{theorem}
\label{theorem:number_weak_components}
Let $L$ and $R$ be nonempty subsets of a group $G$. If $G = \mathcal{W}(\bar{L})\mathcal{W}(\bar{R})$ and $k$ is the minimum weak connection length for $G$ relative to $(L,R)$, then the two-sided group digraph $2\mathrm{S}(G;L,R)$ has exactly $k$ weakly connected components all of the same size. Moreover, if $L \cap N_G(L) \neq \emptyset$ or $R \cap N_G(R) \neq \emptyset$, then all components are isomorphic.
\end{theorem}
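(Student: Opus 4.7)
The plan is to build a well-defined ``degree'' map $d \colon G \to \mathbb{Z}/k\mathbb{Z}$ whose fibers are exactly the weakly connected components. Fix $l \in L$. Given any factorization $g = w_{\bar L, m, a} w_{\bar R, n, b}$, which exists by the hypothesis $G = \mathcal{W}(\bar L)\mathcal{W}(\bar R)$, I set $d(g) := m + n - (a + b) \pmod k$; Lemma~\ref{lem:weak_word_manipulation} already gives $g \sim l^{d(g)}$.

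The numerical engine of the proof is the identity $K := \{n \in \mathbb{Z} : l^n \sim e\} = k\mathbb{Z}$. First I would deduce $l^k \sim e$: any pure word of length $k$ weakly connected to $e$ is, by Lemma~\ref{lem:weak_word_manipulation} and Corollary~\ref{cor:weak_connection_properties}(2), weakly connected to $l^k$. Extracting a factorization $l^k = w_{\bar L, a_0, m_0} w_{\bar R, m_0, a_0}$ from a walk from $e$ to $l^k$ and prepending $l^j$ on the left produces, via Lemma~\ref{lem:weak_word_manipulation} once more, $l^{j+k} \sim l^j$ for every $j \in \mathbb{Z}$. The same ``prepend and apply the lemma'' move applied to any $n \in K$ gives $l^{j+n} \sim l^j$ for all $j$, so taking $j = m \in K$ shows that $K$ is closed under addition; it is closed under negation by Corollary~\ref{cor:weak_connection_properties}(2), hence a subgroup of $\mathbb{Z}$, and the minimality of $k$ forces $K = k\mathbb{Z}$.

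The main obstacle is the well-definedness of $d$, for which I would use the following trick. Suppose $g$ has two factorizations as an $\bar L$-word times an $\bar R$-word, with corresponding integer degrees $d_1$ and $d_2$. Cross-multiplying the two equations yields an element $y \in G$ that is simultaneously a pure $\bar L$-word and a pure $\bar R$-word of explicitly computable counts. Rewriting $e = y \cdot y^{-1}$ as a $\bar L$-word times a $\bar R$-word, a direct computation shows that the Lemma~\ref{lem:weak_word_manipulation}-degree of this factorization of $e$ equals $d_1 - d_2$, so $l^{d_1 - d_2} \sim e$ and therefore $k \mid d_1 - d_2$. Invariance of $d$ under $\sim$ is then immediate, since extending a factorization of $g$ along an arc $g \to h = l_0^{-1} g r_0$ gives a factorization of $h$ with the same integer degree. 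Combined with $g \sim l^{d(g)}$ and the fact that $d(g) \equiv d(h) \pmod k$ implies $l^{d(g)} \sim l^{d(h)}$ (by iterating $l^{j+k} \sim l^j$ the appropriate number of times), this identifies the fibers of $d$ with the weakly connected components, producing exactly $k$ of them.

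For the equal-size conclusion, left multiplication by $l$ is a bijection of $G$ that shifts the integer degree of a factorization by $+1$, and whose inverse shifts it by $-1$, so it restricts to a bijection $C_d \to C_{d+1}$ between consecutive components. For the isomorphism claim, if $l \in L \cap N_G(L)$ the identity
\[
l \cdot l_0^{-1} g r_0 = (l \, l_0 \, l^{-1})^{-1} \cdot (lg) \cdot r_0
\]
together with $l L l^{-1} = L$ shows that left multiplication by $l$ preserves arcs, making it a digraph automorphism that cyclically permutes the components and hence exhibits them as isomorphic; the symmetric argument for $r \in R \cap N_G(R)$ uses right multiplication and $r^{-1} R r = R$.
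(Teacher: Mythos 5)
Your proof is correct and takes essentially the same route as the paper's: both use Lemma~\ref{lem:weak_word_manipulation} to connect every vertex to a power of a fixed $l\in L$, use minimality of $k$ to separate the residues $l^0,\dots,l^{k-1}$ into distinct components, and use multiplication by $l$ (plus the normalizer hypothesis for arc-preservation) to get the equal-size and isomorphism claims, with your degree map $d$ and the cross-multiplication argument for its well-definedness being a cleaner packaging of the paper's direct argument with representatives. The one omission is the case $k=\infty$, which the theorem's statement permits and the paper dispatches in a line (distinct powers of $l$ lie in distinct components, so there are infinitely many); your framework absorbs it by replacing $\mathbb{Z}/k\mathbb{Z}$ with $\mathbb{Z}$.
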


\begin{proof}
Assume $G = \mathcal{W}(\bar{L})\mathcal{W}(\bar{R})$ and let $k$ be the minimum weak connection length for $G$ relative to $(L,R)$. If $k$ is infinite, then by Corollary~\ref{cor:weak_connection_properties}, any two words in $L$ of different lengths are not weakly connected to each other and it follows that $2\mathrm{S}(G;L,R)$ will have infinitely many connected components. Otherwise $k\in \mathbb{N}$ and by Corollary~\ref{cor:weak_connection_properties}, we may assume $e = W_{\bar{L},m,a} l^k W_{\bar{R},a,m}$.  For $0\leq i<j < k$ we claim that $l^i \neq l^j$ and there is no path between $l^i$ and $l^j$.

If $l^i = l^j$ for some $0 \leq i<j< k$, then $e = l^{j-i}$, contradicting the minimality of $k$ as the weak connection length for $G$ relative to $(L,R)$. Similarly, if $l^i=W_{\bar{L},m,a} l^j W_{\bar{R},a,m}$ then $e = l^{-i}W_{\bar{L},m,a} l^j W_{\bar{R},a,m}$ and Lemma~\ref{lem:weak_word_manipulation} yields $e \sim l^{j-i}$ which again contradicts the minimality of $k$. This shows that $2\mathrm{S}(G;L,R)$ has at least $k$ weakly connected components.

To show that $2\mathrm{S}(G;L,R)$ has exactly $k$ weakly connected components, we first notice that since $G = \mathcal{W}(\bar{L})\mathcal{W}(\bar{R})$, Lemma~\ref{lem:weak_word_manipulation} means that for every $g \in G$, $g \sim l^d$ for some integer $d$. Hence it suffices to show that for all $d \in \mathbb{Z}$, $l^d \sim l^i$ for some $0\leq i < k$. This statement is true since by Lemma~\ref{lem:weak_word_manipulation}, $e \sim l^{-k}$ and $e \sim l^k$ which allow $d$ to be reduced modulo $k$.

Fix $l\in L$ and let $\Gamma_i$ for $0 \leq i < k$ be the weakly connected component of $2\mathrm{S}(G;L,R)$ containing $l^i$. Then the $\Gamma_i$ are distinct and the union of $\Gamma_0,\dots,\Gamma_{k-1}$ is $2\mathrm{S}(G;L,R)$. To see that all of the connected components have the same size, consider the injective maps $\phi_i : \Gamma_0 \rightarrow \Gamma_i$ for $1 \leq i < k$ defined by $\phi_i(h) = l^i h$. The map sending $h$ to $l^{-i} h$ is also injective and is an inverse to $\phi_i$, showing that $\phi_i$ is bijective and all connected components have the same size.

Now assume $l \in L \cap N_G(L)$ and let $\Gamma_i$ and $\phi_i$ be defined as above. The maps $\phi_i$ will preserve arcs because if $(x,y)$ is an arc in $\Gamma_0$ then $y = l_1^{-1}xr_1$ for some $l_1 \in L$ and $r_1 \in R$, and, since $l \in N_G(L)$, for some $l_2 \in L$
\[\phi_i(y) = l^i y = l^i l_1^{-1}xr_1 = l_2^{-1} l^i xr_1 = l_2^{-1}\phi_i(x)r_1.\] Similarly, if $(\phi_i(x),\phi_i(y))$ is an arc in $\Gamma_i$, then $(x,y)$ is an arc in $\Gamma_0$. Thus the disjoint connected components are isomorphic to each other.    

For the case when $R\cap N_G(R)\neq \emptyset$, note that the above proof can be modified using the set $\{r^i\}_{i=0}^{k-1}$ to describe the $\Gamma_i$ and defining $\phi_i(h)=hr^i$ instead. 
\end{proof}

\begin{remark}
Note that in Theorem~\ref{theorem:number_weak_components} if in fact $L\cap N_G(L)=L$ and $R\cap N_G(R)=R$, then  $2\mathrm{S}(G;L,R)$ is also vertex-transitive by~\cite[Theorem~1.13]{IrdamusaPraeger2016}.
\end{remark}

\begin{corollary}
\label{cor:weakconn}
The two-sided group digraph $2\mathrm{S}(G;L,R)$ is weakly connected if and only if $G = \mathcal{W}(\bar{L})\mathcal{W}(\bar{R})$ and there exists some element of $\bar{L}$ or $\bar{R}$ that is weakly connected to $e$. 
\end{corollary}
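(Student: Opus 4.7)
The plan is to deduce this corollary directly from Theorem~\ref{theorem:number_weak_components} by recognizing that the stated conditions are exactly what makes the minimum weak connection length $k$ in $G$ relative to $(L,R)$ equal to $1$, so that the number of weakly connected components collapses to $1$.

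For the forward direction, suppose $2\mathrm{S}(G;L,R)$ is weakly connected. Then every $g\in G$ satisfies $g\sim e$, which by the definition of $\sim$ means there exist $m,a\in\mathbb{N}$ with $g = W_{\bar{L},m,a}\,e\,W_{\bar{R},a,m} = W_{\bar{L},m,a}W_{\bar{R},a,m}\in\mathcal{W}(\bar{L})\mathcal{W}(\bar{R})$, establishing $G=\mathcal{W}(\bar{L})\mathcal{W}(\bar{R})$. Furthermore, choosing any $l\in L\subseteq\bar{L}$, weak connectedness gives $l\sim e$, so some element of $\bar{L}$ is weakly connected to $e$.

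For the converse, assume $G=\mathcal{W}(\bar{L})\mathcal{W}(\bar{R})$ and that some element of $\bar{L}$ or $\bar{R}$ is weakly connected to $e$. Since by Definition~\ref{def:word} a word has length at least $1$, a single element of $\bar{L}$ (respectively $\bar{R}$) is a word of length $1$ in either $L$ or $L^{-1}$ (respectively $R$ or $R^{-1}$). Hence the hypothesis provides a word of length $1$, purely in one of $L$, $L^{-1}$, $R$, or $R^{-1}$, that is weakly connected to $e$. By Definition~\ref{def:minweakconnlengthG}, the minimum weak connection length $k$ in $G$ relative to $(L,R)$ therefore satisfies $k\leq 1$, and since $k\geq 1$ always, $k=1$. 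Applying Theorem~\ref{theorem:number_weak_components} then yields that $2\mathrm{S}(G;L,R)$ has exactly one weakly connected component, i.e., it is weakly connected.

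There is no substantive obstacle here; the only point requiring any care is confirming that the phrase ``element of $\bar{L}$ or $\bar{R}$ weakly connected to $e$'' translates precisely into the statement $k=1$ under Definition~\ref{def:minweakconnlengthG}, a translation that depends on length-$1$ words being exactly single set elements.
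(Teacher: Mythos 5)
Your proof is correct and follows the route the paper intends: the corollary is stated immediately after Theorem~\ref{theorem:number_weak_components} precisely because the hypotheses amount to the minimum weak connection length being $k=1$, which is exactly your argument. The translation you flag at the end (length-$1$ words in $L$, $L^{-1}$, $R$, or $R^{-1}$ being exactly the elements of $\bar{L}$ and $\bar{R}$) is handled correctly, so there is nothing to add.
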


Using Proposition~\ref{lem:two_way_connectedness} and Corollary~\ref{cor:remark1} as described before Lemma~\ref{lem:strong_word_manipulation} yields the following.

\begin{theorem}
\label{theorem:number_strong_components}
Let $\mathcal{W}(L)$ and $\mathcal{W}(R)$ be subgroups of $G$. If $G = \mathcal{W}(L)\mathcal{W}(R)$ and $k$ is the minimum strong connection length for $G$ relative to $(L,R)$, then the two-sided group digraph $2\mathrm{S}(G;L,R)$ has exactly $k$ strongly connected components all of the same size. Moreover, if $L \cap N_G(L) \neq \emptyset$ or $R \cap N_G(R) \neq \emptyset$, then all components are isomorphic.
\end{theorem}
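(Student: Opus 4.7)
The plan is to deduce this theorem from Theorem~\ref{theorem:number_weak_components} by invoking the weak-strong equivalence under the subgroup hypothesis. First I would observe that since $\mathcal{W}(L)$ and $\mathcal{W}(R)$ are subgroups of $G$, Proposition~\ref{prop:equiv_hypotheses} gives $\mathcal{W}(L)=\mathcal{W}(L^{-1})=\mathcal{W}(\bar{L})$ and similarly $\mathcal{W}(R)=\mathcal{W}(\bar{R})$. Hence the hypothesis $G=\mathcal{W}(L)\mathcal{W}(R)$ is equivalent to $G=\mathcal{W}(\bar{L})\mathcal{W}(\bar{R})$, so Theorem~\ref{theorem:number_weak_components} applies to $2\mathrm{S}(G;L,R)$.

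Next I would argue that the minimum weak connection length $k_w$ and the minimum strong connection length $k_s$ coincide. Both are infima taken over the same collection of words (namely, words purely in $L$, $L^{-1}$, $R$, or $R^{-1}$), differing only in whether the word is required to be weakly or strongly connected to $e$. By Corollary~\ref{cor:remark1}, under our subgroup hypothesis these two relations are identical, so the set of such words connected to $e$ is the same in both cases and $k_w=k_s=k$.

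Combining these two observations, Theorem~\ref{theorem:number_weak_components} yields exactly $k$ weakly connected components of equal size, and Corollary~\ref{cor:remark1} identifies these as the strongly connected components. The "same size" conclusion transfers verbatim, since the bijections $\phi_i(h)=l^i h$ (or $\phi_i(h)=hr^i$) constructed in the proof of Theorem~\ref{theorem:number_weak_components} are defined on vertex sets and ignore edge directions.

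For the isomorphism claim under $L\cap N_G(L)\neq\emptyset$ or $R\cap N_G(R)\neq\emptyset$, I would reuse the same maps $\phi_i$. The calculation $\phi_i(y)=l^i y=l^i l_1^{-1}x r_1=l_2^{-1}l^i x r_1=l_2^{-1}\phi_i(x)r_1$ already shows that an arc $(x,y)$ maps to the arc $(\phi_i(x),\phi_i(y))$ in the directed sense, so $\phi_i$ is a digraph isomorphism between the corresponding strongly connected components. The only subtle point — really the only thing to check beyond the weak case — is the equality $k_w=k_s$, and this is immediate from Corollary~\ref{cor:remark1}; no further obstacle arises.
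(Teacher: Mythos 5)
Your proof is correct and follows essentially the same route as the paper, which obtains Theorem~\ref{theorem:number_strong_components} from Theorem~\ref{theorem:number_weak_components} by the weak-to-strong conversion supplied by Proposition~\ref{lem:two_way_connectedness} and Corollary~\ref{cor:remark1}. You additionally spell out the two details the paper leaves implicit---that $\mathcal{W}(L)=\mathcal{W}(\bar{L})$ and $\mathcal{W}(R)=\mathcal{W}(\bar{R})$ under the subgroup hypothesis, and that the minimum weak and strong connection lengths coincide---and both are verified correctly.
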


\begin{corollary}
\label{cor:strongconn}
Let $\mathcal{W}(L)$ and $\mathcal{W}(R)$ be subgroups of $G$. Then the two-sided group digraph $2\mathrm{S}(G;L,R)$ is strongly connected if and only if $G = \mathcal{W}(L)\mathcal{W}(R)$ and there exists some element of $\bar{L}$ or $\bar{R}$ that is strongly connected to $e$.
\end{corollary}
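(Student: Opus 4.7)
The plan is to obtain this as essentially the $k=1$ case of Theorem~\ref{theorem:number_strong_components}. Since $\mathcal{W}(L)$ and $\mathcal{W}(R)$ are subgroups of $G$, Proposition~\ref{prop:equiv_hypotheses} gives $\mathcal{W}(L)=\mathcal{W}(L^{-1})=\mathcal{W}(\bar{L})$ and likewise for $R$, so the factorization condition $G=\mathcal{W}(L)\mathcal{W}(R)$ agrees with the factorization hypothesis of Theorem~\ref{theorem:number_strong_components}. The strategy is then to observe that $2\mathrm{S}(G;L,R)$ is strongly connected precisely when it has exactly one strongly connected component, and to translate $k=1$ back into the language of an element of $\bar{L}\cup\bar{R}$ being strongly connected to $e$.

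For the forward direction, assume $2\mathrm{S}(G;L,R)$ is strongly connected. Then every vertex $g\in G$ is reached from $e$ by a directed path, so $g=w_{L^{-1},n}\,e\,w_{R,n}=w_{L^{-1},n}w_{R,n}$ for some $n$; hence $G\subseteq \mathcal{W}(L^{-1})\mathcal{W}(R)=\mathcal{W}(L)\mathcal{W}(R)$, giving equality. Moreover, since $2\mathrm{S}(G;L,R)$ is strongly connected, any $l\in L$ (a word of length one in $L$, hence in $\bar{L}$) is strongly connected to $e$.

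For the converse, assume $G=\mathcal{W}(L)\mathcal{W}(R)$ and that some element of $\bar{L}\cup\bar{R}$ is strongly connected to $e$. By definition of the minimum strong connection length $k$ (a word in $L$, $L^{-1}$, $R$, or $R^{-1}$ strongly connected to $e$ of shortest length), and because Definition~\ref{def:word} requires words to have positive length, we get $k\leq 1$ and hence $k=1$. Applying Theorem~\ref{theorem:number_strong_components} yields that $2\mathrm{S}(G;L,R)$ has exactly one strongly connected component, so it is strongly connected.

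There is no real obstacle here: the argument is a direct repackaging of Theorem~\ref{theorem:number_strong_components}. The only small point to be careful about is the notational bridge supplied by Proposition~\ref{prop:equiv_hypotheses}, which ensures that the subgroup hypothesis makes $\mathcal{W}(L)\mathcal{W}(R)$ and $\mathcal{W}(\bar{L})\mathcal{W}(\bar{R})$ interchangeable, and the observation that "some element of $\bar{L}$ or $\bar{R}$ is strongly connected to $e$" is precisely the statement $k=1$ in the definition of minimum strong connection length.
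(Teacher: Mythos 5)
Your proof is correct and follows exactly the route the paper intends: the corollary is the $k=1$ case of Theorem~\ref{theorem:number_strong_components}, with the forward direction read off from the definition of strong connectedness and the converse obtained by noting that an element of $\bar{L}$ or $\bar{R}$ strongly connected to $e$ forces the minimum strong connection length to be $1$. The remarks about Proposition~\ref{prop:equiv_hypotheses} and the nonemptiness of $L$ are the right small details to check, and nothing is missing.
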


\begin{example}
\label{example:D10}
Consider $2\mathrm{S}(D_6;\{\tau,\tau\sigma^5\},\{\tau\sigma,\tau\sigma^2\})$ as in Example~\ref{example:D6}.
Since $\tau \in L$ and $\sigma =(\tau \sigma^5)\tau \in \mathcal{W}(L)$, we know $D_6=\mathcal{W}(L)=\mathcal{W}(L)\mathcal{W}(R)$.  Since $e \not\sim \tau$ but $e\sim \tau^2=e$, the graph, as seen in Figure~\ref{fig:Problem2D6}, has two strongly connected components of the same size as shown in Theorem~\ref{theorem:number_strong_components}.  Notice that $N_{D_6}(L)=N_{D_6}(R)=\{e,\sigma^3\}$ does not intersect $L$ or $R$ so the fact that the components are not isomorphic does not violate Theorem~\ref{theorem:number_strong_components}.
\end{example}

\begin{figure}[htbp]
\centering
\includegraphics[width=10cm]{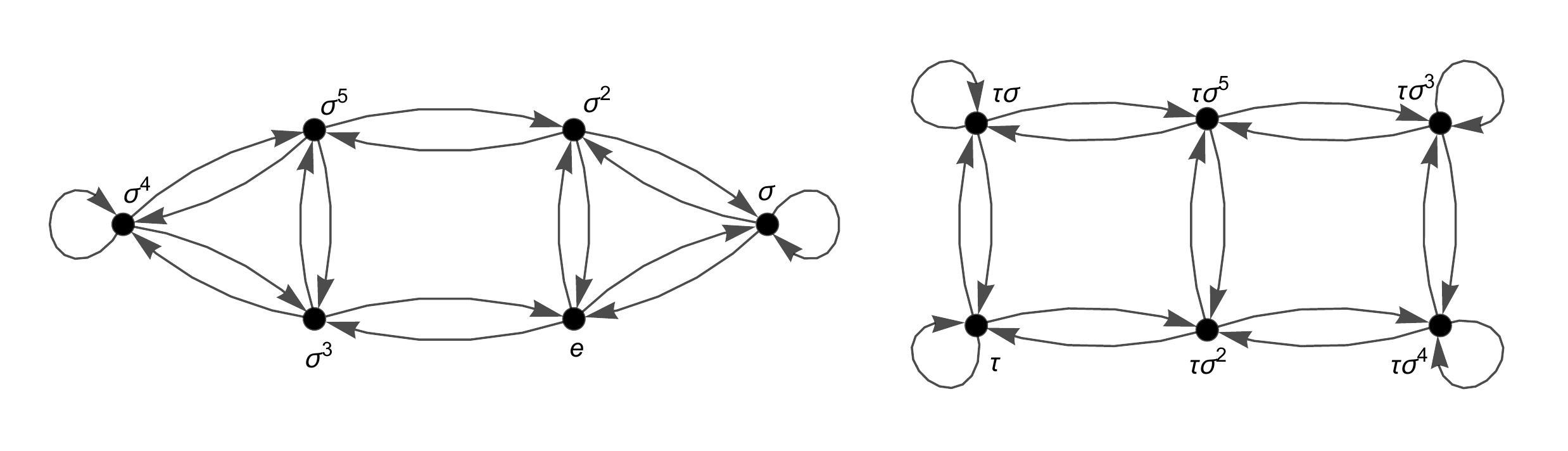}

\caption{$2\mathrm{S}(D_6;\{\tau,\tau\sigma^5\},\{\tau\sigma,\tau\sigma^2\})$}
\label{fig:Problem2D6}
\end{figure}

\begin{example}
\label{example:D6B}
Consider $2\mathrm{S}(D_{10};\{\sigma\},\{\tau,\sigma^3\})$.
It is clear that $D_{10}=\mathcal{W}(R)=\mathcal{W}(L)\mathcal{W}(R)$.  Since $e \not\sim \tau$ but $e\sim \tau^2=e$, the graph, as seen in Figure~\ref{fig:ConnCompEx}, has two strongly connected components of the same size as shown in Theorem~\ref{theorem:number_strong_components}.  Notice that $N_{D_{10}}(L)=\langle \sigma \rangle$ so $\sigma \in L\cap N_{D_{10}}(L)$ and the components are isomorphic by Theorem~\ref{theorem:number_strong_components}.
\end{example}

\begin{figure}[htbp]
\centering
\includegraphics[width=8 cm]{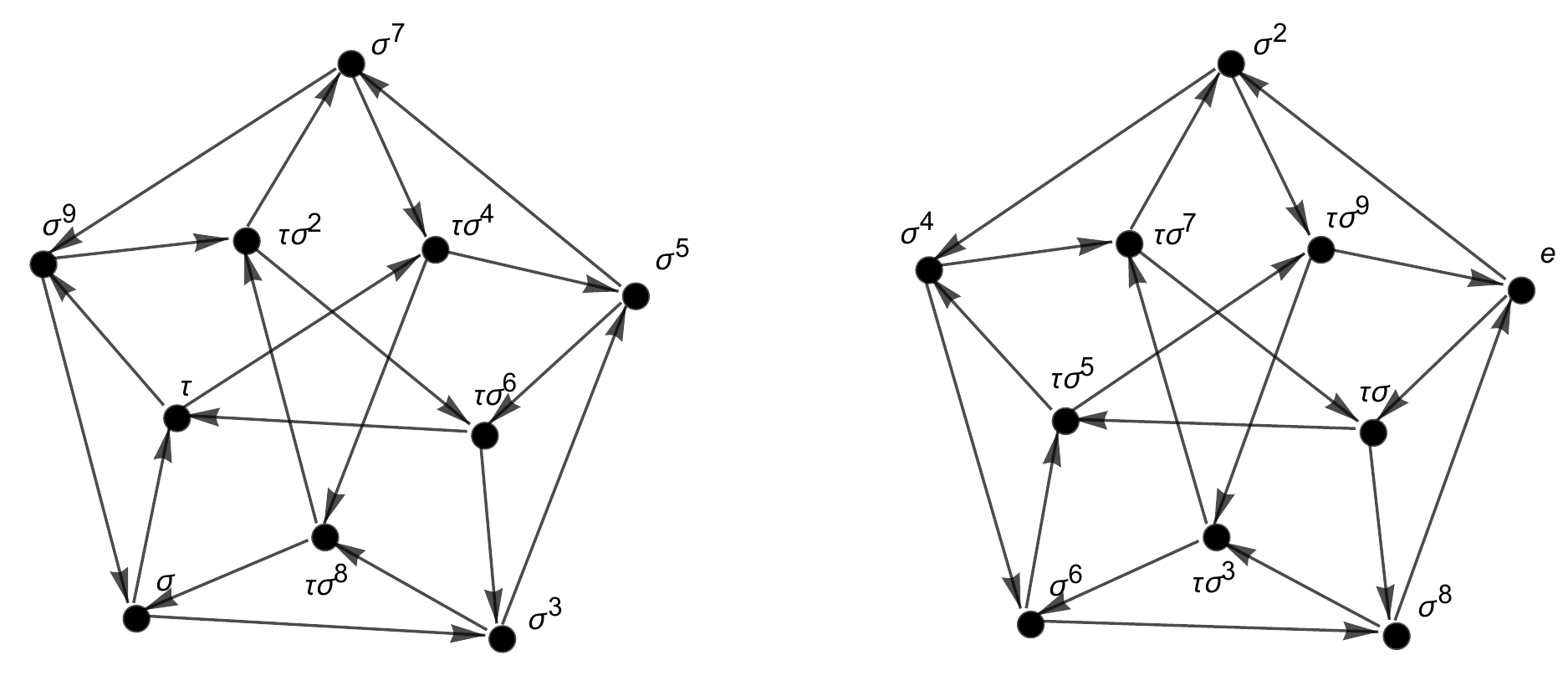}
\caption{$2\mathrm{S}(D_{10};\{\sigma\},\{\tau,\sigma^3\})$}
\label{fig:ConnCompEx}
\end{figure}

%%%%%%%%%%%%%%%%%%%%%%%%%%%%%%%%%%%%%%%%%%
%            Double Cosets               %
%%%%%%%%%%%%%%%%%%%%%%%%%%%%%%%%%%%%%%%%%%

\section{Double cosets}
\label{sec:double_cosets}

Recall that for a Cayley digraph $\text{Cay}(G, S)$ the coset $\langle S \rangle g$ is the weakly connected component of the digraph containing $g\in G$ and that if $H$ and $K$ are subgroups of a group $G$ then the double cosets $HgK$ for $g \in G$ partition $G$ into (possibly different sized) subsets. In the two-sided group digraph $2\mathrm{S}(G; L,R)$ the component containing $g\in G$ need only be contained in the double coset $\langle L \rangle g \langle R \rangle$.  

\begin{proposition}\label{prop:weak_component_coset}
The weakly or strongly connected component of $2\mathrm{S}(G; L,R)$ containing $g$ is a subset of the double coset $\langle L \rangle g \langle R \rangle$. 
\end{proposition}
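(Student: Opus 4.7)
The plan is to induct on the length of a (strongly or weakly) connecting path. Let $h$ be any vertex in the connected component of $g$, and fix a path $g = g_0, g_1, \ldots, g_n = h$ realizing the connection. The inductive hypothesis will be that $g_i \in \langle L \rangle g \langle R \rangle$, i.e., $g_i = u_i g v_i$ for some $u_i \in \langle L \rangle$ and $v_i \in \langle R \rangle$. The base case $i = 0$ is immediate via $g = e \cdot g \cdot e$.

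For the inductive step, the key observation is that every arc of $2\mathrm{S}(G;L,R)$ changes a vertex by left multiplication by an element of $L^{-1} \subseteq \langle L \rangle$ and right multiplication by an element of $R \subseteq \langle R \rangle$, while every reverse arc (needed in the weak case) changes a vertex by left multiplication by an element of $L \subseteq \langle L \rangle$ and right multiplication by an element of $R^{-1} \subseteq \langle R \rangle$. Concretely, if $(g_{i-1}, g_i)$ is an arc then $g_i = l^{-1} g_{i-1} r = (l^{-1} u_{i-1}) g (v_{i-1} r)$ with $l^{-1} u_{i-1} \in \langle L \rangle$ and $v_{i-1} r \in \langle R \rangle$; and if instead $(g_i, g_{i-1})$ is an arc, then $g_{i-1} = l^{-1} g_i r$ gives $g_i = l g_{i-1} r^{-1} = (l u_{i-1}) g (v_{i-1} r^{-1})$, again a product of an element of $\langle L \rangle$, $g$, and an element of $\langle R \rangle$. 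Either way the inductive hypothesis is preserved, so $h = g_n \in \langle L \rangle g \langle R \rangle$.

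Since the strongly connected component is contained in the weakly connected component, handling the weak case subsumes the strong case (or one may simply restrict to the first case above). I do not anticipate a genuine obstacle here; the argument is essentially a bookkeeping induction relying only on the fact that $\langle L \rangle$ and $\langle R \rangle$ are subgroups, and hence inverse-closed. The only subtlety worth flagging is the need to explicitly handle reverse arcs in the weak case so that one uses $r^{-1} \in \langle R \rangle$ and $l \in \langle L \rangle$ rather than only $r \in R$ and $l^{-1} \in L^{-1}$.
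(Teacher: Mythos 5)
Your proof is correct and is essentially the paper's argument: the paper simply records that weak connectedness means $h = W_{\bar{L},m,a}\,g\,W_{\bar{R},a,m}$ with the left word in $\mathcal{W}(\bar{L})=\langle L\rangle$ and the right word in $\mathcal{W}(\bar{R})=\langle R\rangle$, which is exactly the accumulated product your induction tracks arc by arc. Your version just makes the bookkeeping explicit.
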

\begin{proof}
Let $h$ be weakly connected to $g$; that is, $h$ is of the form $W_{\bar{L},m,a}gW_{\bar{R},a,m}$ for some $W_{\bar{L},m,a}\in \mathcal{W}(\bar{L})= \langle L\rangle$ and $W_{\bar{R},a,m}\in \mathcal{W}(\bar{R})=\langle R\rangle$. Then $h\in \langle L  \rangle g \langle R \rangle$ and the weakly or strongly connected component containing $g$ lies in $\langle L \rangle g \langle R \rangle$. 
\end{proof}

In Theorem~\ref{theorem:number_weak_components_coset}, without the assumption that $G=\mathcal{W}(\bar{L})\mathcal{W}(\bar{R})$, we count connected components within double cosets analogously to Theorem~\ref{theorem:number_weak_components}.  Connected components in a given double coset have the same size, but between different double cosets the sizes of components can differ.  This is illustrated in Figure~\ref{Fig:Dinf1} for Example~\ref{ex:finite_infinite}, Figure~\ref{fig:DoubleCosetD3xC3} for Example~\ref{example:D3xC3}, and Figure~\ref{fig:DoubleCosetA5} for Example~\ref{example:A5}.

Let $L$ and $R$ be nonempty subsets of $G$ and fix a set $S$ of double coset representatives for $\langle L \rangle$ and $\langle R \rangle$.
Each $g$ in $G$ lies in a double coset $\langle L \rangle s \langle R \rangle$ for some $s \in S$, and $s$ will play the role in $\langle L \rangle s \langle R \rangle$ that the identity element played in Sections~\ref{sec:connection_length} and~\ref{sec:connected_components}. 

\begin{lemma}
\label{lem:weak_word_manipulation_coset}
In $2\mathrm{S}(G;L,R)$ if $g=w_{\bar{L},m,a}sw_{\bar{R},n,b}$ for $s \in G$, then $g \sim l^ds$ and $g \sim sr^d$ where $l$ is any element of $L$, $r$ is any element of $R$, and $d=m+n-(a+b)$.
\end{lemma}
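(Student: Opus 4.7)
The plan is to adapt the proof of Lemma~\ref{lem:weak_word_manipulation} by treating $s$ as a fixed element that remains in the middle of every expression we manipulate. The only operations used in the original proof are insertions of trivial pairs $rr^{-1}$ and $ll^{-1}$ (which preserve group values) and rearrangements of the repeated padding letters to achieve the opposite-signs matching; none of these operations touch $s$, so the same algebraic manipulations carry over with $s$ sitting between the $\bar L$- and $\bar R$-words throughout.

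For the claim $g \sim l^d s$, I would start from $g = w_{\bar L,m,a}\,s\,w_{\bar R,n,b}$ and mimic the proof of Lemma~\ref{lem:weak_word_manipulation} line by line:
\begin{align*}
g &= w_{\bar L,m,a} \, s \, w_{\bar R,n,b} \, r^{m-a} r^{-(m-a)}\\
  &= W_{\bar L,m,a} \, s \, w_{\bar R,m+n,a+b} \, W_{\bar R,a,m}\\
  &= W_{\bar L,m,a} \, l^{a+b-(m+n)} l^{m+n-(a+b)} \, s \, w_{\bar R,m+n,a+b} \, W_{\bar R,a,m}\\
  &= W_{\bar L,m,a} \, W_{\bar L,a+b,m+n} \, (l^d s) \, W_{\bar R,m+n,a+b} \, W_{\bar R,a,m}.
\end{align*}
Exactly as in the original proof, the outer pair $(W_{\bar L,m,a}, W_{\bar R,a,m})$ and the inner pair $(W_{\bar L,a+b,m+n}, W_{\bar R,m+n,a+b})$ can be arranged with corresponding factors of opposite signs, exhibiting $g \sim l^d s$.

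For $g \sim sr^d$ I would perform the symmetric construction, inserting $l^{b-n} l^{n-b} = e$ at the left end and $r^d r^{-d} = e$ immediately to the right of $s$. The resulting factorization
\[
g = W_{\bar L,b,n} \, W_{\bar L,m+n,a+b} \, (sr^d) \, W_{\bar R,a+b,m+n} \, W_{\bar R,n,b}
\]
again pairs outer $W$'s of counts $(b,n)$ and $(n,b)$ and inner $W$'s of counts $(m+n,a+b)$ and $(a+b,m+n)$ with opposite-sign matching. One verifies the group product equals $g$ since the outer $W_{\bar L}$ has group value $l^{b-n}$, the inner $W_{\bar L}$ has group value $l^{n-b} w_{\bar L,m,a}$, and the inner $W_{\bar R}$ has group value $r^{-d}$, so the product telescopes back to $w_{\bar L,m,a} \, s \, w_{\bar R,n,b}$.

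The main obstacle is bookkeeping: one has to check that each $W$-word can be realized with its claimed counts while representing the required group element. This reduces to verifying that the signed sum of the target element matches the signed sum dictated by the counts (which holds because $d = m+n-(a+b)$) and that any remaining discrepancy in counts can be absorbed by additional $ll^{-1}$ or $rr^{-1}$ pair insertions, which is always possible since padding only grows counts. The rearrangement of repeated padding letters then achieves the opposite-sign matching, just as in the original proof—the $W$'s consisting entirely of padding ($W_{\bar R,a,m}$ and $W_{\bar L,a+b,m+n}$ in the first construction, and $W_{\bar L,b,n}$ and $W_{\bar R,a+b,m+n}$ in the second) are fully flexible, so their factors can always be ordered to oppose the sign pattern of their partner word.
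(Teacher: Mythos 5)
Your proposal is correct and follows the paper's own argument: the paper proves this lemma by noting that the proof of Lemma~\ref{lem:weak_word_manipulation} goes through verbatim with $s$ inserted between the $\bar{L}$-words and the $\bar{R}$-words, which is exactly what you do. Your explicit symmetric construction for $g \sim sr^d$ and the remark about which padding words are fully flexible simply spell out details the paper leaves implicit.
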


\begin{proof}
This proof is identical to the proof of Lemma~\ref{lem:weak_word_manipulation} with $s$ inserted between the words from $\bar{L}$ and words from $\bar{R}$.
\end{proof}

\begin{corollary}
\label{cor:weak_connection_properties_coset}
In $2\mathrm{S}(G;L,R)$ the following hold with $s \in G$.
\begin{enumerate}
\item There exist words $w_{L,m}$ and $w_{L,n}$ with $m\neq n$ such that $w_{L,m}s \sim w_{L,n}s$ if and only if there exists $w_{L,k}$ such that  $w_{L,k}s \sim s$.
One can take $k=\lvert m - n \rvert$.
\item  There exists a word $w_{L,n}$ such that $w_{L,n}s \sim s$ if and only if there exists a word $w_{L^{-1},n}$ such that $w_{L^{-1},n}s \sim s$.
\item If $g$ is in $\langle L \rangle s \langle R \rangle$ then $w_{L,k}g \sim g$ for some $w_{L,k}$ in $\mathcal{W}(L)$ if and only if $w'_{L,k}s \sim s$ for some $w'_{L,k}$ in $\mathcal{W}(L)$.
\end{enumerate}
\end{corollary}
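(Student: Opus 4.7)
The plan is to mirror the argument for Corollary~\ref{cor:weak_connection_properties} with the element $s$ inserted between the left and right factors, using Lemma~\ref{lem:weak_word_manipulation_coset} everywhere in place of Lemma~\ref{lem:weak_word_manipulation}. The single manipulation that recurs is: given a weak connection written as $l^f s = W_{\bar{L}, p, a}\, l^e s\, W_{\bar{R}, a, p}$, left-multiply by a power of $l$ and then reapply Lemma~\ref{lem:weak_word_manipulation_coset}, reading off the net signed exponent to identify the resulting element with some $l^{d'} s$. Because Lemma~\ref{lem:weak_word_manipulation_coset} converts any factorization $w_{\bar{L}, m, a}\, s\, w_{\bar{R}, n, b}$ into a weak connection to $l^{d} s$, arbitrary words are never a true obstruction; only powers of a fixed $l \in L$ need to be tracked.

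For part~(1), assume $w_{L,m}s \sim w_{L,n}s$ with $m < n$. Lemma~\ref{lem:weak_word_manipulation_coset} gives $l^m s \sim w_{L,m}s$ and $l^n s \sim w_{L,n}s$, so $l^m s \sim l^n s$ and hence $l^n s = W_{\bar{L}, p, a}\, l^m s\, W_{\bar{R}, a, p}$ for some $p, a$; left-multiplying by $l^{-m}$ rewrites this as $l^{n-m} s = w_{\bar{L}, p+m, a+m}\, s\, w_{\bar{R}, a, p}$, whose associated exponent is $(p+m) + a - (a+m) - p = 0$, so Lemma~\ref{lem:weak_word_manipulation_coset} yields $l^{n-m} s \sim s$ and $k = n - m$ works with $w_{L,k} = l^k$. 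For the converse, $w_{L,k} s \sim s$ reduces via Lemma~\ref{lem:weak_word_manipulation_coset} to $l^k s \sim s$; writing $l^k s = W_{\bar{L}, p, a}\, s\, W_{\bar{R}, a, p}$ and left-multiplying by $l^k$ produces $l^{2k} s = w_{\bar{L}, k+p, a}\, s\, w_{\bar{R}, a, p}$, which by Lemma~\ref{lem:weak_word_manipulation_coset} is weakly connected to $l^k s$, furnishing two words in $L$ of distinct lengths $k$ and $2k$.

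Part~(2) runs the same template starting from $l^n s \sim s$: write $l^n s = W_{\bar{L}, p, a}\, s\, W_{\bar{R}, a, p}$, left-multiply by $l^{-n}$, and apply Lemma~\ref{lem:weak_word_manipulation_coset} (associated exponent $p + a - (n+a) - p = -n$) to conclude $s \sim l^{-n} s$, that is, $w_{L^{-1}, n} s \sim s$ with $w_{L^{-1}, n} = l^{-n}$; the reverse direction is the same argument with $L$ and $L^{-1}$ interchanged. For part~(3), note $\langle L\rangle = \mathcal{W}(\bar{L})$ and $\langle R\rangle = \mathcal{W}(\bar{R})$ so that $g = u_{\bar{L}}\, s\, u_{\bar{R}}$ for some words $u_{\bar{L}}, u_{\bar{R}}$, and let $d$ be the net signed exponent of this factorization, giving $g \sim l^d s$ and $l^k g \sim l^{k+d} s$ via Lemma~\ref{lem:weak_word_manipulation_coset}. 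Then $l^k g \sim g$ is equivalent to $l^{k+d} s \sim l^d s$, which in turn follows from $l^k s \sim s$ by left-multiplying its defining equation by $l^d$ and reapplying Lemma~\ref{lem:weak_word_manipulation_coset}; a brief sign split for $d \geq 0$ versus $d < 0$ controls whether $l^d$ adds to the $L$-count or the $L^{-1}$-count on the left of $s$, but the resulting net exponent is $d$ in either case.

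The main obstacle is entirely bookkeeping: every application of Lemma~\ref{lem:weak_word_manipulation_coset} requires tracking how many $L$- and $L^{-1}$-letters (and correspondingly how many $R$- and $R^{-1}$-letters) accumulate on each side of $s$, and the sign of the exponent by which one left-multiplies determines which count grows. No new idea beyond the letter-counting already used in Corollary~\ref{cor:weak_connection_properties} is required, since $s$ is treated as an inert central factor by Lemma~\ref{lem:weak_word_manipulation_coset}.
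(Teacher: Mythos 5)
Your proposal is correct and follows essentially the same route as the paper: reduce every word to a power of a fixed $l\in L$ via Lemma~\ref{lem:weak_word_manipulation_coset}, then left-multiply by suitable powers of $l$ and track the net signed exponent, exactly as in Corollary~\ref{cor:weak_connection_properties} with $s$ carried along as an inert factor (the paper's own proof of parts (1)--(2) is just the phrase ``follow similarly,'' and for part (3) it performs the same reduction $g\sim l^{d}s$, $w_{L,k}g\sim l^{k+d}s$ before invoking part (1)). The only cosmetic difference is that for part (3) the paper notes the equivalence is symmetric in $g$ and $s$ and proves one implication, while you argue the other implication directly; both are instances of the same exponent-bookkeeping manipulation.
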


\begin{proof}
The first two parts follow similarly to their analogues in Corollary~\ref{cor:weak_connection_properties}.

For (3), note that by symmetry it is enough to prove one direction. Let $g=w_{\bar{L},m,a}sw_{\bar{R},n,b}$ and $w_{L,k}g \sim g$. 
Rewriting $w_{L,k}g \sim g$ in terms of $s$ yields $w_{L,k}w_{\bar{L},m,a}sw_{\bar{R},n,b} \sim w_{\bar{L},m,a}sw_{\bar{R},n,b}$.
Applying Lemma~\ref{lem:weak_word_manipulation_coset} to both sides yields $l^{k+d}s\sim l^ds$.
Hence $l^ks \sim s$ by (1).
\end{proof}

By the first two parts of Corollary~\ref{cor:weak_connection_properties_coset}, if there exists a minimum length $k_s$ of a word $w$ in $L$ such that $ws \sim s$, then it is also the minimum length of such a word in $L^{-1}$, and by Corollary~\ref{cor:weak_connection_properties_coset}~(3) the minimum such length is independent of the representative of a double coset.  
Inserting $r^{k_s}r^{-k_s}$ to the right of $s$ shows that $k_s$ is also the minimum length of a word $w$ in $R$ such that $sw \sim s$, and hence, by an $R$ version of Corollary~\ref{cor:weak_connection_properties_coset}, $k_s$ is also the minimum such length of a word in $R^{-1}$.
Thus the following definition for the minimum weak connection length in $\langle L \rangle s \langle R \rangle$ is well-defined.

\begin{definition}
The \textit{minimum weak connection length in $\langle L \rangle s \langle R \rangle$} is the minimum length $k_s$ of a word $w$ purely in $L$ or $L^{-1}$ such that $ws \sim s$ in $2\mathrm{S}(G;L,R)$, or the minimum length $k_s$ of a word $w$ purely in $R$ or $R^{-1}$ such that $sw \sim s$ in $2\mathrm{S}(G;L,R)$.  Take $k_s$ to be infinite if there is no such minimum. 
Algebraically this is equivalent to the minimum length of a word $w$ purely in $L$ or $L^{-1}$ such that $s=W_{\bar{L},m,a}wsW_{\bar{R},a,m}$ for some $a, m \in \mathbb{N}$, or the minimum length of a word $w$ purely in $R$ or $R^{-1}$ such that $s=W_{\bar{L},m,a}swW_{\bar{R},a,m}$ for some $a, m \in \mathbb{N}$.
\end{definition}

\begin{theorem}
\label{theorem:number_weak_components_coset}
Let $L$ and $R$ be nonempty subsets of a group $G$. If $k_s$ is the minimum weak connection length for $\langle L \rangle s \langle R \rangle$, then the  double coset $\langle L \rangle s \langle R \rangle$ within $2\mathrm{S}(G;L,R)$ consists of exactly $k_s$ weakly connected components all of the same size. Moreover, if $L \cap N_G(L) \neq \emptyset$ or $R \cap N_G(R) \neq \emptyset$, then all components within the same double coset are isomorphic.
\end{theorem}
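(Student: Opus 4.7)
The plan is to imitate the proof of Theorem~\ref{theorem:number_weak_components}, using Lemma~\ref{lem:weak_word_manipulation_coset} and Corollary~\ref{cor:weak_connection_properties_coset} in place of Lemma~\ref{lem:weak_word_manipulation} and Corollary~\ref{cor:weak_connection_properties}, with $s$ playing the role of the identity. If $k_s = \infty$, then Corollary~\ref{cor:weak_connection_properties_coset}(1) shows that no two elements $l^i s$ for $l \in L$ and $i \ge 0$ are weakly connected, so the double coset splits into infinitely many components. Otherwise fix $l \in L$; combining Corollary~\ref{cor:weak_connection_properties_coset}(1) with Lemma~\ref{lem:weak_word_manipulation_coset} gives $l^{k_s} s \sim s$, so $s = W_{\bar{L},m,a}\, l^{k_s}\, s\, W_{\bar{R},a,m}$ for some $a, m \in \mathbb{N}$.

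Next I will show that $s, ls, l^2 s, \dots, l^{k_s - 1} s$ are pairwise distinct and lie in pairwise distinct weak components. Equality $l^i s = l^j s$ with $0 \le i < j < k_s$ forces $l^{j-i} s = s$, contradicting minimality of $k_s$. If instead $l^i s \sim l^j s$, then writing the weak connection as $l^j s = W_{\bar{L},m,a}\, l^i s\, W_{\bar{R},a,m}$ and left-multiplying by $l^{-i}$ yields $l^{j-i} s = w_{\bar{L},m+i,a+i}\, s\, w_{\bar{R},a,m}$, from which Lemma~\ref{lem:weak_word_manipulation_coset} gives $l^{j-i} s \sim s$; since $0 < j - i < k_s$, this again contradicts minimality. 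Then for any $g \in \langle L\rangle s \langle R\rangle = \mathcal{W}(\bar{L})\, s\, \mathcal{W}(\bar{R})$, write $g = w_{\bar{L},m,a}\, s\, w_{\bar{R},n,b}$; Lemma~\ref{lem:weak_word_manipulation_coset} gives $g \sim l^d s$ for $d = m + n - (a+b)$, and I reduce $d$ modulo $k_s$ using $l^{\pm k_s} s \sim s$ (Corollary~\ref{cor:weak_connection_properties_coset}(2) supplies the negative exponent). Thus the double coset consists of exactly $k_s$ weak components $\Gamma_0, \dots, \Gamma_{k_s - 1}$, where $\Gamma_i$ is the component containing $l^i s$.

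For equal sizes, define $\phi_i : \Gamma_0 \to \Gamma_i$ by $\phi_i(h) = l^i h$. If $h \sim s$, write $h = W_{\bar{L},m,a}\, s\, W_{\bar{R},a,m}$; prepending $l^i$ gives $l^i h = w_{\bar{L},m+i,a}\, s\, w_{\bar{R},a,m}$, and Lemma~\ref{lem:weak_word_manipulation_coset} yields $l^i h \sim l^{(m+i) + a - (a+m)} s = l^i s$, so $\phi_i(h) \in \Gamma_i$. The symmetric map $h \mapsto l^{-i} h$ carries $\Gamma_i$ back into $\Gamma_0$ and inverts $\phi_i$, so $|\Gamma_0| = |\Gamma_i|$. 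For the isomorphism claim with $l \in L \cap N_G(L)$, given an arc $(x, l_1^{-1} x r_1)$ in $\Gamma_0$ I compute
\[
\phi_i(l_1^{-1} x r_1) = l^i l_1^{-1} x r_1 = (l^i l_1 l^{-i})^{-1}\, \phi_i(x)\, r_1,
\]
and since $l^i \in N_G(L)$ the element $l^i l_1 l^{-i}$ lies in $L$, making $\phi_i$ an arc-preserving bijection $\Gamma_0 \cong \Gamma_i$. The case $R \cap N_G(R) \neq \emptyset$ is handled analogously by using right multiplication $h \mapsto h r^i$ for some $r \in R \cap N_G(R)$. The main technical subtlety is tracking $\bar{L}$-counts through the left multiplications so that Lemma~\ref{lem:weak_word_manipulation_coset} is applied with the correct exponent of $l$ modulo $k_s$; once this bookkeeping is in hand, the remaining steps follow the template of Theorem~\ref{theorem:number_weak_components}.
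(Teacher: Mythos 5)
Your proof is correct and follows exactly the route the paper takes: the paper's own proof of Theorem~\ref{theorem:number_weak_components_coset} is simply the observation that the argument of Theorem~\ref{theorem:number_weak_components} goes through verbatim with $s$ in place of $e$, using Lemma~\ref{lem:weak_word_manipulation_coset} and Corollary~\ref{cor:weak_connection_properties_coset}. You have supplied the same argument with the bookkeeping written out explicitly.
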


\begin{proof}
This follows from Lemma~\ref{lem:weak_word_manipulation_coset} and Corollary~\ref{cor:weak_connection_properties_coset} exactly as in the proof of Theorem~\ref{theorem:number_weak_components}.
\end{proof}

\begin{corollary}
In the two-sided group digraph $2\mathrm{S}(G;L,R)$ there are $\sum\limits_{s\in S} k_s$ weakly connected components, where $S$ is a set of double coset representatives for $G$ modulo $\langle L \rangle$ and $\langle R \rangle$ and $k_s$ is the minimum weak connection length for $\langle L \rangle s \langle R \rangle$.
\end{corollary}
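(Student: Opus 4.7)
The plan is to observe that this corollary is an immediate consequence of combining Proposition~\ref{prop:weak_component_coset} with Theorem~\ref{theorem:number_weak_components_coset}, together with the standard fact that double cosets partition the group. Since the structural work has already been done, the proof is essentially a bookkeeping argument rather than a new construction.

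First I would note that the double cosets $\{\langle L \rangle s \langle R \rangle : s \in S\}$ form a partition of the vertex set $G$ of $2\mathrm{S}(G;L,R)$. Next, by Proposition~\ref{prop:weak_component_coset}, every weakly connected component of $2\mathrm{S}(G;L,R)$ is contained in some double coset $\langle L \rangle s \langle R \rangle$; combined with the partition property, each component lies in a unique double coset. Therefore, the set of weakly connected components of $2\mathrm{S}(G;L,R)$ is the disjoint union, over $s \in S$, of the sets of weakly connected components contained in $\langle L \rangle s \langle R \rangle$.

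Finally, I would apply Theorem~\ref{theorem:number_weak_components_coset}, which asserts that the double coset $\langle L \rangle s \langle R \rangle$ contains exactly $k_s$ weakly connected components. Summing over a set $S$ of double coset representatives gives the total count
\[
\sum_{s \in S} k_s,
\]
as claimed.

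There is no real obstacle here: the only subtle point is making sure one uses the fact that double cosets genuinely partition $G$ (as opposed to merely covering it) so that no component is double-counted. Since the weakly connected components are themselves disjoint and each sits inside exactly one double coset, the sum indexed by $S$ counts each component once. No additional hypothesis on $L$, $R$, or $G$ is needed beyond what is already in place.
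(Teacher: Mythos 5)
Your proof is correct and follows exactly the argument the paper intends (the corollary is stated there without proof as an immediate consequence): double cosets partition $G$, Proposition~\ref{prop:weak_component_coset} places each weakly connected component inside a unique double coset, and Theorem~\ref{theorem:number_weak_components_coset} counts $k_s$ components per coset, so the total is $\sum_{s\in S} k_s$. Nothing further is needed.
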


\begin{remark}
Note that in Theorem~\ref{theorem:number_weak_components_coset} if in fact $L\cap N_G(L)=L$ and $R\cap N_G(R)=R$, then by an argument similar to that in~\cite[Theorem~1.13]{IrdamusaPraeger2016} the subgraph $\langle L \rangle s \langle R \rangle$ is vertex-transitive.
\end{remark}

\begin{figure}[htb]
\centering
\begin{tikzpicture}
\fill (-4,2) circle [radius=0.1] node [above=2pt] {$\sigma^{-2}$};
\fill (-2,2) circle [radius=0.1] node [below=2pt] {$\sigma^{-1}$};
\fill (0,2) circle [radius=0.1] node [above=2pt] {$e$};
\fill (2,2) circle [radius=0.1] node [below=2pt] {$\sigma$};
\fill (4,2) circle [radius=0.1] node [above=2pt] {$\sigma^2$};

\fill (-4,0) circle [radius=0.1] node [above=2pt] {$\tau\sigma^{-2}$};
\fill (-2,0) circle [radius=0.1] node [above=2pt] {$\tau\sigma^{-1}$};
\fill (-0,0) circle [radius=0.1] node [above=2pt] {$\tau$};
\fill (2,0) circle [radius=0.1] node [above=2pt] {$\tau\sigma$};
\fill (4,0) circle [radius=0.1] node [above=2pt] {$\tau\sigma^{2}$};

\draw [->] (-6, 1.4) to [out=0,in=210] (-4.1,1.9);
\draw [->] (-3.9, 1.9) to [out=330,in=210] (-0.1,1.9);
\draw [->] (0.1, 1.9) to [out=330,in=210] (3.9,1.9);
\draw [->] (4.1,1.9) to [out=330,in=180] (6,1.4);

\draw [->] (-5, 2.5) to [out=10,in=150] (-2.1,2.1);
\draw [->] (-1.9, 2.1) to [out=30,in=150] (1.9,2.1);
\draw [->] (2.1, 2.1) to [out=30,in=170] (5,2.5);

\draw [->] (-4.1,-0.1) to [loop below,out=225,in=315,looseness=10] (-3.9,-0.1);
\draw [->] (-2.1,-0.1) to [loop below,out=225,in=315,looseness=10] (-1.9,-0.1);
\draw [->] (-0.1,-0.1) to [loop below,out=225,in=315,looseness=10] (0.1,-0.1);
\draw [->] (1.9,-0.1) to [loop below,out=225,in=315,looseness=10] (2.1,-0.1);
\draw [->] (3.9,-0.1) to [loop below,out=225,in=315,looseness=10] (4.1,-0.1);

\draw (-5,2) circle [radius=0.02];
\draw (-5.5,2) circle [radius=0.02];
\draw (-6,2) circle [radius=0.02];

\draw (5,2) circle [radius=0.02];
\draw (5.5,2) circle [radius=0.02];
\draw (6,2) circle [radius=0.02];

\draw (-5,0) circle [radius=0.02];
\draw (-5.5,0) circle [radius=0.02];
\draw (-6,0) circle [radius=0.02];

\draw (5,0) circle [radius=0.02];
\draw (5.5,0) circle [radius=0.02];
\draw (6,0) circle [radius=0.02];
\end{tikzpicture}
\caption{$2\mathrm{S}(D_\infty;\{\sigma^a\},\{\sigma^b\})$ where $a=-1,b=1$}\label{Fig:Dinf1}
\end{figure}

\begin{example}\label{ex:finite_infinite}
Consider $2\mathrm{S}(D_{\infty};\{\sigma^a\},\{\sigma^b\})$ with $\gcd(a,b) = 1$ and where $D_{\infty}$ is the group of isometries of $\mathbb{Z}$ with the presentation $D_{\infty}=\langle \sigma,\tau \mid \tau^2=e, \sigma\tau=\tau\sigma^{-1}\rangle$.  We think of $\sigma$ as right translation and $\tau$ as negation. Since $\langle L \rangle= \{\sigma^{an}\}$ and $\langle R \rangle=\{\sigma^{bn}\}$ with $a,b$ relatively prime, $D_{\infty}$ has two double cosets, namely $\langle L \rangle\langle R \rangle=\langle \sigma \rangle$ and $\langle L \rangle\tau\langle R \rangle=\tau\langle \sigma \rangle$.

It is easy to see that each $g\in D_\infty$ has exactly one out-neighbor and one in-neighbor (possibly the same). If $g = \sigma^n \in \langle L\rangle\langle R\rangle$, then $g$ lies on the arcs $(\sigma^n,\sigma^{n+(b-a)})$ and $(\sigma^{n-(b-a)},\sigma^n)$. If instead $g = \tau\sigma^n \in \langle L\rangle\tau\langle R\rangle$, then $g$ lies on the arcs $(\tau\sigma^n,\tau\sigma^{n+a+b})$ and $(\tau\sigma^{n-(a+b)},\tau\sigma^n)$. Therefore, the structure of the graph depends on $b-a$ and $a+b$.

If $b-a\neq 0$, then the double coset $\langle L\rangle\langle R\rangle$ consists of $|b-a|$ weakly connected components each consisting of $\sigma^n$ with $n$ fixed modulo $|b-a|$. If $b-a = 0$, then the arcs are of the form $(\sigma^n,\sigma^n)$, and the double coset consists of isolated points linked only to themselves, so has infinitely many connected components. Both of these cases illustrate the results of Theorem~\ref{theorem:number_weak_components_coset}.

The value of $a+b$ plays the same role for the structure of the double coset $\langle L\rangle\tau\langle R\rangle$.
Two example graphs are provided, Figure~\ref{Fig:Dinf1} for $a=-1,b=1$ and Figure~\ref{Fig:Dinf2} for $a=1,b=2$.

\end{example}

\begin{figure}[htb]
\centering
\begin{tikzpicture}[scale=0.9]
\fill (-4,2) circle [radius=0.1] node [above=2pt] {$\sigma^{-2}$};
\fill (-2,2) circle [radius=0.1] node [above=2pt] {$\sigma^{-1}$};
\fill (0,2) circle [radius=0.1] node [above=2pt] {$e$};
\fill (2,2) circle [radius=0.1] node [above=2pt] {$\sigma$};
\fill (4,2) circle [radius=0.1] node [above=2pt] {$\sigma^2$};
\fill (6,2) circle [radius=0.1] node [above=2pt] {$\sigma^3$};

\fill (-4,0) circle [radius=0.1] node [above=2pt] {$\tau\sigma^{-2}$};
\fill (-2,0) circle [radius=0.1] node [above=2pt] {$\tau\sigma^{-1}$};
\fill (-0,0) circle [radius=0.1] node [above=2pt] {$\tau$};
\fill (2,0) circle [radius=0.1] node [above=2pt] {$\tau\sigma$};
\fill (4,0) circle [radius=0.1] node [above=2pt] {$\tau\sigma^2$};
\fill (6,0) circle [radius=0.1] node [above=2pt] {$\tau\sigma^3$};

\draw [->] (-4.7,2) -- (-4.2,2);
\draw [->] (-3.8,2) -- (-2.2,2);
\draw [->] (-1.8,2) -- (-0.2,2);
\draw [->] (0.2,2) -- (1.8,2);
\draw [->] (2.2,2) -- (3.8,2);
\draw [->] (4.2,2) -- (5.8,2);
\draw [->] (6.2,2) -- (6.7,2);

\draw [->] (-3.9,-0.1) to [out=330,in=210] (1.9,-0.1);
\draw [->] (-1.9,-0.1) to [out=330,in=210] (3.9,-0.1);
\draw [->] (0.1,-0.1) to [out=330,in=210] (5.9,-0.1);

\draw [->] (-5.3,-0.6) to [out=20,in=210] (-4.1,-0.1);
\draw [->] (-5,-0.9) to [out=0,in=210] (-2.1,-0.1);
\draw [->] (-5,-0.6) to [out=340,in=210] (-0.1,-0.1);

\draw [->] (2.1,-0.1) to [out=330,in=200] (7,-0.6);
\draw [->] (4.1,-0.1) to [out=330,in=180] (7,-0.9);
\draw [->] (6.1,-0.1) to [out=330,in=160] (7.3,-0.6);

\draw (-5,2) circle [radius=0.02];
\draw (-5.5,2) circle [radius=0.02];
\draw (-6,2) circle [radius=0.02];

\draw (7,2) circle [radius=0.02];
\draw (7.5,2) circle [radius=0.02];
\draw (8,2) circle [radius=0.02];

\draw (-5,0) circle [radius=0.02];
\draw (-5.5,0) circle [radius=0.02];
\draw (-6,0) circle [radius=0.02];

\draw (7,0) circle [radius=0.02];
\draw (7.5,0) circle [radius=0.02];
\draw (8,0) circle [radius=0.02];
\end{tikzpicture}
\caption{$2\mathrm{S}(D_\infty;\{\sigma^a\},\{\sigma^b\})$ where $a=1,b=2$}\label{Fig:Dinf2}
\end{figure}

Proposition~\ref{lem:two_way_connectedness} and Corollary~\ref{cor:remark1} again yield corresponding strongly connected results.

\begin{lemma}
\label{lem:strong_word_manipulation_coset}
In $2\mathrm{S}(G;L,R)$ if $\mathcal{W}(L)$ and $\mathcal{W}(R)$ are subgroups of $G$ and $g=w_{L^{-1},a}sw_{R,n}$ for $s \in G$, then $g$ is strongly connected to $l^ds$ and to $sr^d$ where $l$ is any element of $L$, $r$ is any element of $R$, and $d=n-a$.
\end{lemma}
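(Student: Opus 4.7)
My plan is to derive this lemma as a direct consequence of its weak analog, Lemma~\ref{lem:weak_word_manipulation_coset}, combined with Corollary~\ref{cor:remark1}. First I would observe that the factorization $g = w_{L^{-1},a} s w_{R,n}$ is the special case of the hypothesis $g = w_{\bar{L},m,a} s w_{\bar{R},n,b}$ of Lemma~\ref{lem:weak_word_manipulation_coset} obtained by setting $m = b = 0$: all left factors lie in $L^{-1}$, all right factors lie in $R$. The integer $d = m + n - (a + b)$ appearing in that lemma then specializes to $n - a$, which is exactly the $d$ demanded in the present statement. Invoking Lemma~\ref{lem:weak_word_manipulation_coset} yields the weak connections $g \sim l^{d} s$ and $g \sim s r^{d}$ for any $l \in L$ and $r \in R$.

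Second, because $\mathcal{W}(L)$ and $\mathcal{W}(R)$ are assumed to be subgroups of $G$, Corollary~\ref{cor:remark1} tells us that weak and strong connectedness coincide in $2\mathrm{S}(G;L,R)$. Applying this, the weak connections just obtained upgrade to strong connections, which is precisely what the lemma asserts.

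No real obstacle is expected: all substantive work was already carried out in Lemma~\ref{lem:weak_word_manipulation_coset} and Corollary~\ref{cor:remark1}, and the only care needed is to match the index conventions so that $d$ comes out to $n - a$. Should one prefer a direct argument avoiding Corollary~\ref{cor:remark1}, one could instead mimic the chain of equalities in the proof of Lemma~\ref{lem:weak_word_manipulation} (with $s$ inserted between the $L^{-1}$ word and the $R$ word) while keeping every inserted factor in $L^{-1}$ on the left and in $R$ on the right; the subgroup hypotheses on $\mathcal{W}(L)$ and $\mathcal{W}(R)$ would then be used to rewrite any inverse power of $l$ or $r$ as a genuine word in $L^{-1}$ or $R$, so that signs remain aligned and the resulting path is a directed path in both directions.
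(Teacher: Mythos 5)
Your proposal is correct and matches the paper's approach: the paper states this lemma immediately after remarking that ``Proposition~\ref{lem:two_way_connectedness} and Corollary~\ref{cor:remark1} again yield corresponding strongly connected results,'' i.e., it too specializes Lemma~\ref{lem:weak_word_manipulation_coset} (with $m=b=0$, so $d=n-a$) and upgrades weak to strong connectedness using the subgroup hypotheses. Your index bookkeeping is right, so nothing further is needed.
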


\begin{corollary}
\label{cor:strong_connection_properties_coset}
In $2\mathrm{S}(G;L,R)$ if $\mathcal{W}(L)$ and $\mathcal{W}(R)$ are subgroups of $G$, then the following three properties hold for any $s \in G$.
\begin{enumerate}
\item There exist words $w_{L^{-1},m}$ and $w_{L^{-1},n}$ with $m\neq n$ such that $w_{L^{-1},m}s$ is strongly connected to $w_{L^{-1},n}s$ if and only if there exists $w_{L^{-1},k}$ such that  $w_{L^{-1},k}s$ is strongly connected to $s$.
In practice, $k=\lvert m - n \rvert$.
\item There exists a word $w_{L,n}$ such that $w_{L,n}s$ is strongly connected to $s$ if and only if there exists a word $w_{L^{-1},n}$ such that $w_{L^{-1},n}s$ is strongly connected to $s$.
\item If $g$ is in $\langle L \rangle s \langle R \rangle$ then $w_{L,k}g$ is strongly connected to $g$ for some $w_{L,k}$ in $\mathcal{W}(L)$ if and only if $w'_{L,k}s$ is strongly connected to $s$ for some $w'_{L,k}$ in $\mathcal{W}(L)$.
\end{enumerate}
\end{corollary}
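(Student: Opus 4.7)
My plan is to reduce each of the three claims to its weak-connectedness analogue in Corollary~\ref{cor:weak_connection_properties_coset}. The hypothesis that $\mathcal{W}(L)$ and $\mathcal{W}(R)$ are subgroups of $G$ allows us to invoke Proposition~\ref{lem:two_way_connectedness}, and Corollary~\ref{cor:remark1} then identifies the weakly and strongly connected components of $2\mathrm{S}(G;L,R)$. In particular, the relation $\sim$ appearing in Corollary~\ref{cor:weak_connection_properties_coset} coincides with strong connectedness in our setting, and each clause of Corollary~\ref{cor:strong_connection_properties_coset} is simply the corresponding clause of Corollary~\ref{cor:weak_connection_properties_coset} read with ``is strongly connected to'' in place of $\sim$.

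If a direct derivation is preferred, one can repeat the proof of Corollary~\ref{cor:weak_connection_properties_coset} verbatim, substituting Lemma~\ref{lem:strong_word_manipulation_coset} for Lemma~\ref{lem:weak_word_manipulation_coset} at every step. For (1), assume $m>n$ and that $w_{L^{-1},m}s$ is strongly connected to $w_{L^{-1},n}s$; viewing each as $w_{L^{-1},a}\,s\,w_{R,0}$ and applying Lemma~\ref{lem:strong_word_manipulation_coset} shows these vertices are strongly connected to $l^{-m}s$ and $l^{-n}s$ respectively, so left-multiplying by $l^{n}$ and reapplying the lemma produces a word in $L^{-1}$ of length $k=|m-n|$ whose product with $s$ is strongly connected to $s$. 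Part~(2) follows by inverting a witnessing identity $s = w_{L^{-1},p}\,w_{L,n}s\,w_{R,p}$ and using the subgroup hypothesis to re-express the resulting inverses as words in $L^{-1}$, $L$, and $R$ of the needed length. Part~(3) is handled by first writing $g = w_{L^{-1},a}\,s\,w_{R,b}$ (possible because $g \in \langle L\rangle s\langle R\rangle = \mathcal{W}(L)s\mathcal{W}(R) = \mathcal{W}(L^{-1})s\mathcal{W}(R)$ by Proposition~\ref{prop:equiv_hypotheses}), applying Lemma~\ref{lem:strong_word_manipulation_coset} to both sides of the hypothesized connection $w_{L,k}g$ strongly connected to $g$, and then invoking part~(1) on the resulting comparison of $l^{d+k}s$ with $l^{d}s$.

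The main obstacle is largely bookkeeping: in (3) one must realize $g$ explicitly as a product $w_{L^{-1},a}\,s\,w_{R,b}$ so that Lemma~\ref{lem:strong_word_manipulation_coset} applies, and at each step where inverses of words in $L$ or $R$ appear they must be rewritten as words in the same set of controlled length. Both requirements depend crucially on $\mathcal{W}(L)$ and $\mathcal{W}(R)$ being subgroups, which is precisely what Proposition~\ref{lem:two_way_connectedness} exploits to make the identification of weak and strong connectedness valid in the first place; once that identification is in hand, the three parts follow line-for-line from the weak case.
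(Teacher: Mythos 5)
Your reduction of each clause to its weak analogue in Corollary~\ref{cor:weak_connection_properties_coset} via Proposition~\ref{lem:two_way_connectedness} and Corollary~\ref{cor:remark1} is precisely the paper's intended argument: the paper gives no separate proof, stating only that those two results ``again yield corresponding strongly connected results.'' Your proposal is correct and takes essentially the same approach (the direct rederivation you sketch as an alternative is also sound but unnecessary).
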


\begin{definition}
Assuming that $\mathcal{W}(L)$ and $\mathcal{W}(R)$ are subgroups of $G$, the \textit{minimum strong connection length in $\langle L \rangle s \langle R \rangle$} is the minimum length $k_s$ of a word $w$ purely in $L$ or $L^{-1}$ such that $ws$ is strongly connected to $s$ in $2\mathrm{S}(G;L,R)$, or the minimum length $k_s$ of a word $w$ purely in $R$ or $R^{-1}$ such that $sw$ is strongly connected to $s$ in the two-sided group digraph $2\mathrm{S}(G;L,R)$.  Take $k_s$ to be infinite if there is no such minimum. 
Algebraically this is equivalent to the minimum length of a word $v$ purely in $L$ or $L^{-1}$ such that $s=w_{L^{-1},n}vsw_{R,n}$ for some $n \in \mathbb{N}$, or the minimum length of a word $v$ purely in $R$ or $R^{-1}$ such that $s=w_{L^{-1},n}svw_{R,n}$ for some $n \in \mathbb{N}$.
\end{definition}

\begin{theorem}
\label{theorem:number_strong_components_coset}
Let $\mathcal{W}(L)$ and $\mathcal{W}(R)$ be subgroups of $G$. If $k_s$ is the minimum strong connection length for $\langle L \rangle s \langle R \rangle$ in $2\mathrm{S}(G;L,R)$, then the  double coset $\langle L \rangle s \langle R \rangle$ consists of exactly $k_s$ strongly connected components all of the same size. Moreover, if $L \cap N_G(L) \neq \emptyset$ or $R \cap N_G(R) \neq \emptyset$, then all components within the same double coset are isomorphic.
\end{theorem}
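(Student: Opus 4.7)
The plan is to mirror the proof of Theorem~\ref{theorem:number_weak_components_coset}, replacing each appeal to weak-connectedness tools with the corresponding strong-connectedness version. Since $\mathcal{W}(L)$ and $\mathcal{W}(R)$ are subgroups of $G$, Proposition~\ref{lem:two_way_connectedness} and Corollary~\ref{cor:remark1} guarantee that weak and strong components coincide, and Lemma~\ref{lem:strong_word_manipulation_coset} together with Corollary~\ref{cor:strong_connection_properties_coset} provide direct analogues of Lemma~\ref{lem:weak_word_manipulation_coset} and Corollary~\ref{cor:weak_connection_properties_coset}. Consequently the skeleton of the earlier argument transfers essentially verbatim, with all ``$\sim$'' statements now interpreted as strong connectedness.

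Concretely, fix $l \in L$ and consider the elements $l^i s$ for $0 \le i < k_s$. I would first show these lie in distinct strongly connected components: if $l^i s$ were strongly connected to $l^j s$ with $0 \le i < j < k_s$, then left-multiplying by $l^{-i}$ (which lies in $\mathcal{W}(L^{-1}) = \mathcal{W}(L)$ by Proposition~\ref{prop:equiv_hypotheses}) and applying Lemma~\ref{lem:strong_word_manipulation_coset} to the resulting directed path would yield $l^{j-i} s$ strongly connected to $s$, contradicting the minimality of $k_s$. Conversely, for an arbitrary $g = w_{L^{-1},a} s w_{R,n} \in \langle L \rangle s \langle R \rangle$ (the subgroup hypothesis allowing the use of $L^{-1}$ rather than $\bar L$), Lemma~\ref{lem:strong_word_manipulation_coset} gives $g$ strongly connected to $l^{n-a} s$, and then repeated use of $l^{\pm k_s} s \sim s$ (valid by Corollary~\ref{cor:strong_connection_properties_coset}(2)) reduces the exponent modulo $k_s$. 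The case $k_s = \infty$ yields infinitely many components by the same distinctness argument. Equal sizes then follow from the set-bijections $\phi_i(h) = l^i h$ with inverse $h \mapsto l^{-i} h$; these restrict to bijections between components because left multiplication by the fixed element $l^i$ sends an arc $(x, l_1^{-1} x r_1)$ to the arc $(l^i x, l_1^{-1} l^i x r_1)$.

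For the isomorphism claim, assuming $l \in L \cap N_G(L)$ the normalizer condition upgrades $\phi_i$ from a vertex bijection to a digraph isomorphism: since $l^i L l^{-i} = L$, rewriting $l^i l_1^{-1} x r_1 = (l^i l_1^{-1} l^{-i}) l^i x r_1 = l_2^{-1} l^i x r_1$ with $l_2 \in L$ exhibits arc-preservation, and the reverse direction is symmetric. The case $R \cap N_G(R) \neq \emptyset$ is handled dually using right-multiplication maps $h \mapsto h r^i$ together with the $R$-side version of Lemma~\ref{lem:strong_word_manipulation_coset} built around the elements $s r^i$. The main obstacle I anticipate is bookkeeping rather than substance: one must consistently invoke the subgroup hypothesis on $\mathcal{W}(L)$ and $\mathcal{W}(R)$ whenever words in $L$ are traded for words in $L^{-1}$ (or words in $R$ for words in $R^{-1}$) in order to remain in the strong-connectedness regime, since the strong-connectedness analogues of the weak lemmas do not allow mixing signs within a single side.
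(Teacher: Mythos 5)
Your overall strategy is exactly the paper's: the paper gives no separate proof of Theorem~\ref{theorem:number_strong_components_coset}, instead relying on Proposition~\ref{lem:two_way_connectedness} and Corollary~\ref{cor:remark1} to convert the weak-connectedness argument of Theorems~\ref{theorem:number_weak_components} and~\ref{theorem:number_weak_components_coset} into a strong-connectedness one via Lemma~\ref{lem:strong_word_manipulation_coset} and Corollary~\ref{cor:strong_connection_properties_coset}. Your treatment of the lower bound, the reduction of exponents modulo $k_s$, and the normalizer-based isomorphisms all track the proof of Theorem~\ref{theorem:number_weak_components}.

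There is, however, one step that fails as written: your justification that the maps $\phi_i(h)=l^ih$ restrict to bijections between components. You assert that left multiplication by $l^i$ sends the arc $(x,\,l_1^{-1}xr_1)$ to the arc $(l^ix,\,l_1^{-1}l^ixr_1)$; but the image of the second endpoint is $l^il_1^{-1}xr_1$, which equals $l_1^{-1}l^ixr_1$ only if $l^i$ commutes with $l_1$, and equals some $l_2^{-1}l^ixr_1$ with $l_2\in L$ only if $l^i$ normalizes $L$. If left multiplication preserved arcs unconditionally, every pair of components would automatically be isomorphic and the normalizer hypothesis in the ``moreover'' clause would be superfluous, so this cannot be the right reason. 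The claim you need --- that $\phi_i$ carries the component of $s$ into the component of $l^is$ --- is true, but for a different reason: if $h$ is strongly connected to $s$, write $h=w_{L^{-1},n}sw_{R,n}$; then $l^ih=l^iw_{L^{-1},n}sw_{R,n}$ is connected to $l^ds$ with $d=i+n-n=i$ by Lemma~\ref{lem:weak_word_manipulation_coset} (converted to the strong setting by Corollary~\ref{cor:remark1}, or by first rewriting the mixed left word as a word in $L^{-1}$ using the subgroup hypothesis and applying Lemma~\ref{lem:strong_word_manipulation_coset}). The same computation with $l^{-i}$ shows the inverse map carries the component of $l^is$ back into that of $s$. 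With that repair the rest of your argument, including the genuinely arc-preserving case $l\in L\cap N_G(L)$ and the dual $R$-side argument, goes through.
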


\begin{problem}
Theorems~\ref{theorem:number_weak_components},~\ref{theorem:number_strong_components},~\ref{theorem:number_weak_components_coset}, and~\ref{theorem:number_strong_components_coset}
provide sufficient conditions for connected components to be isomorphic.  Find necessary and sufficient conditions for this to occur.
\end{problem}

\begin{corollary}
Let $\mathcal{W}(L)$ and $\mathcal{W}(R)$ be subgroups of $G$. The two-sided group digraph $2\mathrm{S}(G;L,R)$ consists of $\sum\limits_{s\in S} k_s$ strongly connected components, where $S$ is a set of double coset representatives for $G$ modulo $\langle L \rangle$ and $\langle R \rangle$ and $k_s$ is the minimum strong connection length for $\langle L \rangle s \langle R \rangle$.
\end{corollary}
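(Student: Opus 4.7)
The plan is to combine Proposition~\ref{prop:weak_component_coset} with Theorem~\ref{theorem:number_strong_components_coset} and observe that the double cosets $\langle L\rangle s\langle R\rangle$ for $s\in S$ partition $G$. The corollary then follows by summing the component counts contributed by each double coset.

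First I would recall that by standard group theory the double cosets $\{\langle L\rangle s\langle R\rangle : s\in S\}$ are pairwise disjoint and their union is $G$, so every vertex of $2\mathrm{S}(G;L,R)$ belongs to a unique such double coset. Next, by Proposition~\ref{prop:weak_component_coset} applied to strong connectedness, the strongly connected component containing any vertex $g$ is entirely contained in the double coset $\langle L\rangle g\langle R\rangle$ that contains $g$. Consequently no strongly connected component is split across distinct double cosets, and the set of strongly connected components of $2\mathrm{S}(G;L,R)$ is the disjoint union, over $s\in S$, of the sets of strongly connected components lying inside $\langle L\rangle s\langle R\rangle$.

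Then I would invoke Theorem~\ref{theorem:number_strong_components_coset}, which (under the standing hypothesis that $\mathcal{W}(L)$ and $\mathcal{W}(R)$ are subgroups of $G$) asserts that the double coset $\langle L\rangle s\langle R\rangle$ contains exactly $k_s$ strongly connected components, where $k_s$ is the minimum strong connection length for that double coset. Summing over the disjoint contributions gives that the total number of strongly connected components of $2\mathrm{S}(G;L,R)$ equals $\sum_{s\in S} k_s$, with the convention that the total is infinite if any $k_s$ is infinite or if $S$ is infinite.

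There is essentially no obstacle here: the corollary is a direct bookkeeping consequence of the two cited results, and the only point requiring a brief comment is that the count $k_s$ does not depend on the particular choice of representative $s$ of the double coset (which is exactly the content of Corollary~\ref{cor:strong_connection_properties_coset}(3), ensuring that the sum $\sum_{s\in S} k_s$ is well-defined independently of the chosen transversal $S$).
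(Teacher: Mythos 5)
Your proposal is correct and matches the intended argument: the paper states this corollary without proof precisely because it is the bookkeeping consequence of Proposition~\ref{prop:weak_component_coset} (components lie inside double cosets, which partition $G$) together with Theorem~\ref{theorem:number_strong_components_coset} (each double coset contributes exactly $k_s$ components). Your added remark on the well-definedness of $k_s$ via Corollary~\ref{cor:strong_connection_properties_coset}(3) is exactly the point the paper makes when defining the minimum strong connection length in a double coset.
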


\begin{example}
\label{example:D3xC3}
The digraph $2\mathrm{S}(D_3\times C_3;\{(\tau\sigma^2,g^2)\},\{(e,g^2),(\tau,g^2)\})$ shown in Figure~\ref{fig:DoubleCosetD3xC3} is an example of Theorem~\ref{theorem:number_strong_components_coset}.
The two double cosets in $G=D_3\times C_3$ are $\langle L \rangle \langle R \rangle$ and $\langle L \rangle (\sigma^2,e) \langle R \rangle$, both of which have minimum strong connection length of three.  Since $L$ consists of a single element, $L\cap N_G(L)\neq \emptyset$ and all components within each double coset are isomorphic.
\end{example}

\begin{figure}[htb]
\centering
\includegraphics[width=10 cm]{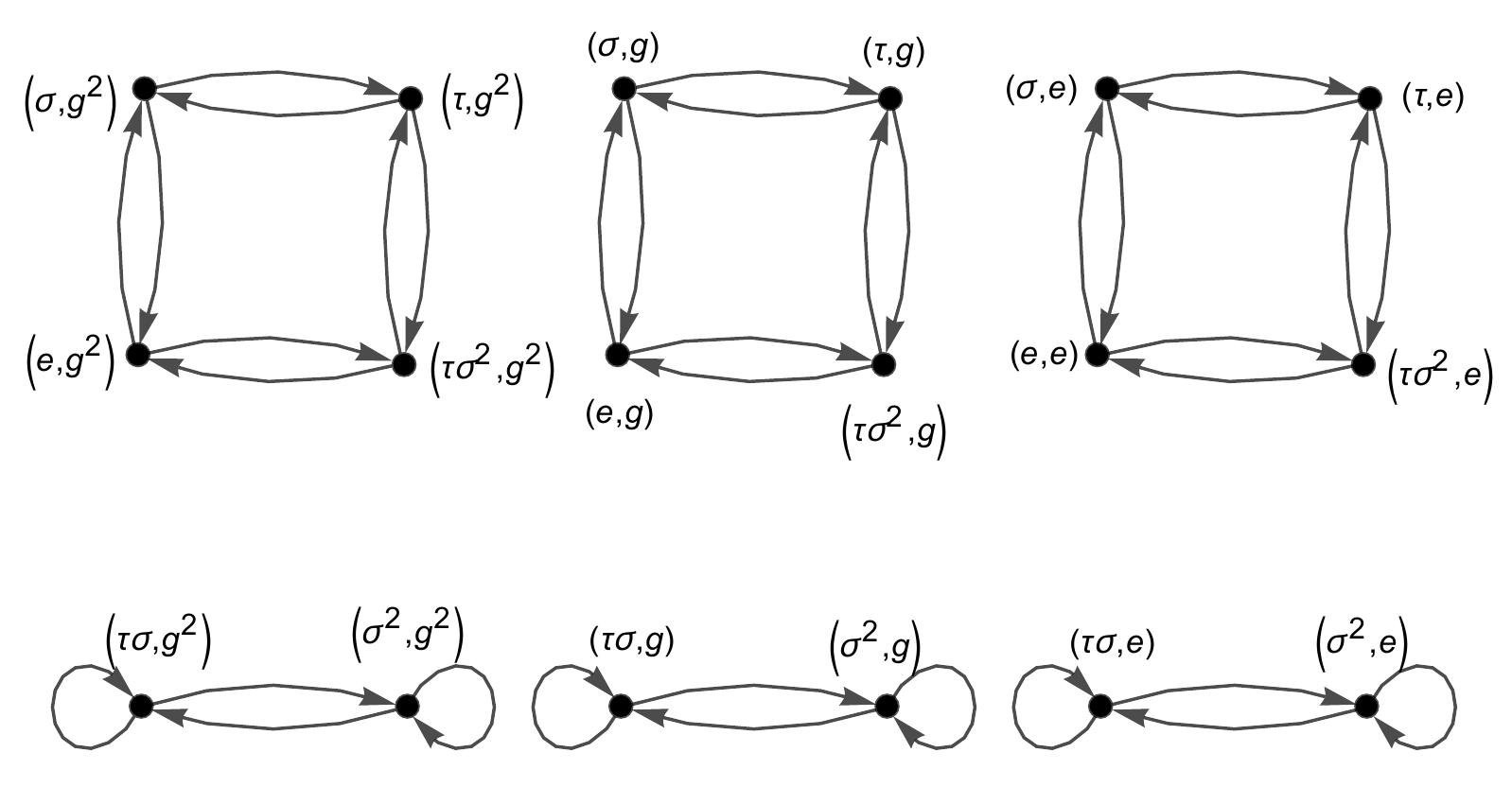}
\caption{$2\mathrm{S}(D_3\times C_3;\{(\tau\sigma^2,g^2)\},\{(e,g^2),(\tau,g^2)\})$}
\label{fig:DoubleCosetD3xC3}
\end{figure}

\begin{example}
\label{example:A5}
Another example is provided by $2\mathrm{S}(A_5;\{(235)\},\{(243),(254)\})$, shown in Figure~\ref{fig:DoubleCosetA5}.
There are three double cosets in $A_5$ modulo $\langle L \rangle$ and $\langle R \rangle$, whose representatives are the identity, $(123)$, and $(145)$. The minimum strong connection length is three in the first two double cosets and one in the third.  The connected components of $\langle L \rangle \langle R \rangle$ have four vertices.
All the connected components in the other two double cosets contain $12$ vertices and are isomorphic.  
That the components within $\langle L \rangle (123) \langle R \rangle$ are isomorphic follows from the fact that $L$ consists of a single element so $L\cap N_G(L)\neq \emptyset$.
\end{example}

\begin{figure}[htb]
\centering
\includegraphics[width=10cm]{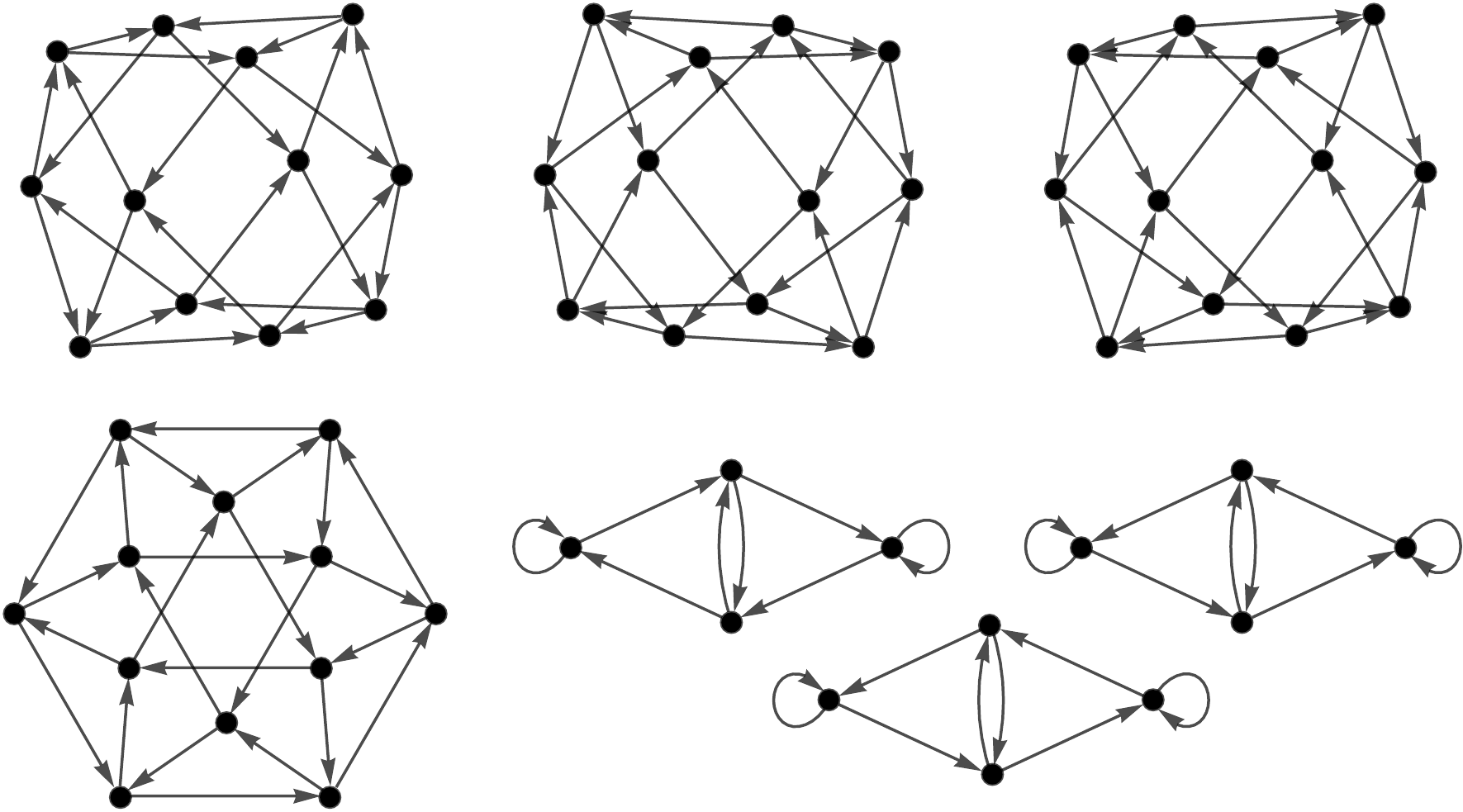}
\caption{$2\mathrm{S}(A_5;\{(235)\},\{(243),(254)\})$}
\label{fig:DoubleCosetA5}
\end{figure}

%%%%%%%%%%%%%%%%%%%%%%%%%%%%%%%%%%%%%%%%%%
%           Orbit Counting               %
%%%%%%%%%%%%%%%%%%%%%%%%%%%%%%%%%%%%%%%%%%

\section{Orbit counting}
\label{sec:orbitcounting}
Another way to count strongly connected components is to use group actions.
We briefly review necessary background material.

A group $G$ \textit{acts} (on the right) on a set $X$ if there exists a function $\alpha: X\times G \rightarrow X$, where $(x, g)\mapsto x.g$ 
such that $x.e=x$ and for all $g_1, g_2\in G$ and all $x\in X$, $x\cdot(g_1g_2)=(x \cdot g_1).g_2.$ If $G$ acts on a set $X$, then for any $x\in X$, the set $x.G=\left\{x.g\mid g\in G\right\}$ is the \textit{orbit} of $x$ under $G$. It can be shown that $X$ is the disjoint union of its orbits.
If $G$ acts on $X$, the \textit{stabilizer} of $x\in X$ is the subgroup $G_x=\left\{ g\mid x.g=x\right\}$ of $G$ and the set fixed by $g \in G$ is $X^g=\{x \mid x.g=x\}$. 
The following well-known results are used to prove Theorem~\ref{theorem:orbit_counting}.

\begin{lemma}\label{lem:grp_size_prod_orbit_stab}
Suppose that a group $G$ acts on a set $X$. If $x\in X$, then the mapping $\phi: G_x\backslash G \to x.G$ defined by $\phi(G_x g)=x.g$ is well-defined and bijective. Thus, $|G|=|x.G||G_x|.$ 
\end{lemma}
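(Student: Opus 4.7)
The plan is to verify the three properties of $\phi$ in turn (well-defined, injective, surjective) and then invoke Lagrange's theorem. This is a standard orbit-stabilizer argument.

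First I would check well-definedness and injectivity in one stroke by establishing the equivalence $G_x g_1 = G_x g_2 \iff x.g_1 = x.g_2$. The cosets agree iff $g_1 g_2^{-1} \in G_x$, i.e., iff $x.(g_1 g_2^{-1}) = x$. Applying the action axioms $x.e = x$ and $x.(g_1 g_2) = (x.g_1).g_2$, acting on both sides by $g_2$ converts this into $x.g_1 = x.g_2$. Reading the chain forwards gives well-definedness of $\phi$; reading it backwards gives injectivity.

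Next I would address surjectivity, which is essentially tautological: an arbitrary element of $x.G$ has the form $x.g$ for some $g \in G$, and this is $\phi(G_x g)$ by definition.

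Finally, combining these three properties yields the bijection $\phi : G_x \backslash G \to x.G$, so $|G_x \backslash G| = |x.G|$. Applying Lagrange's theorem (or just the coset partition of $G$ when $G$ is infinite, interpreting the cardinality equation appropriately) gives $|G| = |G_x \backslash G|\,|G_x| = |x.G|\,|G_x|$. No step here is a genuine obstacle; the only mild subtlety is making sure the action-axiom manipulation $x.(g_1 g_2^{-1}) = x \iff x.g_1 = x.g_2$ is done cleanly, since it requires applying the action by $g_2$ to both sides and using associativity of the action.
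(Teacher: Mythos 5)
Your argument is correct and is the canonical orbit–stabilizer proof: the equivalence $G_x g_1 = G_x g_2 \iff g_1g_2^{-1}\in G_x \iff x.g_1 = x.g_2$ gives well-definedness and injectivity simultaneously, surjectivity is immediate, and the coset partition yields $|G|=|x.G|\,|G_x|$. The paper states this lemma as a well-known result and supplies no proof of its own, so there is nothing to compare against; your write-up is exactly the standard argument the authors are implicitly invoking.
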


\begin{lemma}\label{lem:stabilizer_conjugate}
Suppose that a group $G$ acts on a set $X$.
\begin{enumerate}
\item  If $x\in X$ and $g\in G$, the stabilizer of $x.g$ is $G_{x.g}=g^{-1}G_xg.$ 
\item  If $x$ and $y$ are in the same orbit under $G$,  then $|G_x|=|G_y|.$ 
\end{enumerate}
\end{lemma}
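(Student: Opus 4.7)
My plan is to prove each part in turn by direct computation with the action axioms $x.e = x$ and $x.(g_1g_2) = (x.g_1).g_2$, relying on nothing beyond these defining properties.

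For part (1), I will establish $G_{x.g} = g^{-1}G_x g$ by showing both inclusions. To see $G_{x.g} \subseteq g^{-1}G_x g$, I would take $h \in G_{x.g}$, so $(x.g).h = x.g$, and then rewrite the left side as $x.(gh)$ using the action axiom. Acting on both sides by $g^{-1}$ and regrouping via the axioms gives $x.(ghg^{-1}) = x.(gh).g^{-1} = (x.g).g^{-1} = x.(gg^{-1}) = x.e = x$, so $ghg^{-1} \in G_x$ and hence $h \in g^{-1}G_x g$. For the reverse inclusion, I would take $h = g^{-1}kg$ with $k \in G_x$ and compute
\[
(x.g).h = x.(gh) = x.(kg) = (x.k).g = x.g,
\]
so $h \in G_{x.g}$. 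The only subtle point is making sure each regrouping is a legitimate use of the associativity axiom rather than a symbolic manipulation; I would be explicit about that at least once.

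For part (2), the setup is that $y$ lies in the orbit of $x$, so $y = x.g$ for some $g \in G$. Applying part (1) gives $G_y = g^{-1}G_x g$. Since conjugation by $g$ is a bijection from $G_x$ onto $g^{-1}G_x g$ (with inverse given by conjugation by $g^{-1}$), the sets have the same cardinality, so $|G_y| = |G_x|$.

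The argument is essentially routine once the action axioms are used carefully, so I do not anticipate a significant obstacle; the only thing to watch is to avoid implicitly assuming a left action or using $(gh)^{-1}$ style manipulations that are not yet justified in the setup. A clean way to present part (1) is to observe first that for any $a \in G$ and $y \in X$, the axioms imply $y.(a a^{-1}) = y$, so acting by $a^{-1}$ is the inverse operation of acting by $a$; this lets the regrouping steps above be read off immediately.
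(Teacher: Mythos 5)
Your proof is correct: both inclusions in part (1) follow exactly as you compute from the right-action axioms, and part (2) is an immediate consequence via the bijectivity of conjugation. The paper states this lemma as a well-known result and gives no proof, so there is nothing to compare against; your argument is the standard one, correctly adapted to the right-action convention (note that your conclusion $G_{x.g}=g^{-1}G_xg$ matches the paper's statement precisely because the action is on the right).
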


\begin{theorem}\label{theorem:orbit_counting}
Suppose that a group $G$ acts on a set $X$. The number $N$ of distinct orbits of $G$ on $X$ satisfies
\[N\cdot |G|=\sum_{g\in G}|X^g|.
\]
\end{theorem}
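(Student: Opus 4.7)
The plan is to prove this classical result (Burnside's lemma / Cauchy--Frobenius lemma) by double counting the set
\[
S = \{(x,g)\in X\times G \mid x.g = x\}.
\]

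First I would count $|S|$ by summing over $g\in G$. For each fixed $g$, the number of pairs $(x,g)\in S$ is precisely $|X^g|$ by definition. This immediately gives $|S|=\sum_{g\in G}|X^g|$, which is the right-hand side of the claimed identity.

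Next I would count $|S|$ by summing over $x\in X$ instead. For each fixed $x$, the number of $g\in G$ with $x.g=x$ is $|G_x|$, so $|S|=\sum_{x\in X}|G_x|$. By Lemma~\ref{lem:grp_size_prod_orbit_stab}, for every $x\in X$ we have $|G_x|=|G|/|x.G|$, so each summand depends only on the orbit containing $x$. I would then partition $X$ into its $N$ distinct orbits $\mathcal{O}_1,\dots,\mathcal{O}_N$ (using the fact, noted just before the statement, that $X$ is the disjoint union of its orbits). Using Lemma~\ref{lem:stabilizer_conjugate}(2), all stabilizers of points in a given orbit $\mathcal{O}_i$ have the same order, namely $|G|/|\mathcal{O}_i|$, so
\[
\sum_{x\in \mathcal{O}_i}|G_x| = |\mathcal{O}_i|\cdot\frac{|G|}{|\mathcal{O}_i|} = |G|.
\]

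Summing over the $N$ orbits gives $\sum_{x\in X}|G_x|=N\cdot|G|$. Equating the two counts of $|S|$ yields $N\cdot|G|=\sum_{g\in G}|X^g|$, as required. There is no serious obstacle here: the argument is entirely mechanical once one sees the double-counting setup, and the two preceding lemmas supply exactly the orbit-stabilizer facts needed.
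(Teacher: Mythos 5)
Your proof is correct and is essentially identical to the paper's: both double-count the set of fixed pairs $\{(x,g)\mid x.g=x\}$, evaluate the sum over $x$ orbit by orbit, and invoke the same two lemmas (orbit--stabilizer and constancy of stabilizer order along an orbit) to show each orbit contributes $|G|$. The only cosmetic difference is that you divide by $|\mathcal{O}_i|$ where the paper multiplies $|x_i.G|\,|G_{x_i}|=|G|$, which implicitly assumes the same finiteness the paper dispenses with at the outset.
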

\begin{proof}
The case where $X$ or $G$ is infinite is trivial so let $X$ and $G$ be finite.
Consider the set  $Y=\left\{(x, g)\mid g\in G, x\in X, x.g=x\right\}\subset X\times G$. 
We may count elements of $Y$ as $|Y|=\sum\limits_{g\in G}|X^g|=\sum\limits_{x\in X}|G_x|$.
Alternatively, consider representatives $x_1, x_2, x_3, \cdots , x_N$ from each orbit of $X$. If $x$ is in the same orbit as $x_i$, then $x.G=x_i.G$ and hence, by Lemma~\ref{lem:stabilizer_conjugate}, $|G_x|=|G_{x_i}|$.
We therefore have, by Lemma~\ref{lem:grp_size_prod_orbit_stab},
\[
\sum_{g\in G}|X^g|=\sum_{i=1}^{N}\sum_{x\in x_i.G}|G_x|=\sum_{i=1}^{N}|x_i.G||G_{x_i}|=\sum_{i=1}^{N}|G|=N\cdot |G|.
\]\end{proof}

We apply this result to $2\mathrm{S}(G;L,R)$.
Define \[U=\left\{(w_{L,n},w_{R,n}) \mid w_{L,n}\in \mathcal{W}(L), w_{R,n}\in \mathcal{W}(R)\right\} \subseteq G \times G.\]
We show that if $\mathcal{W}(L)$ and $\mathcal{W}(R)$ are subgroups of $G$ then $U$ is a subgroup of $G\times G$. 
The set $U$ is clearly closed under multiplication.
The fact that $U$ is closed under inverses follows from the proof of Proposition~\ref{lem:two_way_connectedness}.  Since $U$ is not empty it contains an identity and $U$ is a group under composition. 

The action of $U$ on $G$ is induced by the standard action of $G \times G$ on $G$ by $g \cdot (g_1,g_2)=g_1^{-1} g g_2$, i.e.,   $g\cdot (w_{L,n},w_{R,n})=w_{L,n}^{-1}gw_{R,n}$.
One can check that this is in fact a right action.  
For each element $g$ in $G$, the orbit $g\cdot U$ is the strongly connected component of $2\mathrm{S}(G; L,R)$ containing $g$. 

\begin{corollary}
\label{cor:groupaction}
Let $2\mathrm{S}(G; L,R)$ be a two-sided group digraph where $\mathcal{W}(L)$ and $\mathcal{W}(R)$ are subgroups of $G$ and with the group $U=\left\{(w_{L,n},w_{R,n}) \mid w_{L,n}\in \mathcal{W}(L), w_{R,n}\in \mathcal{W}(R)\right\}$ acting on $G$ as defined above.
The number $N$ of strongly connected components in $2\mathrm{S}(G; L,R)$ satisfies $N \cdot|U|=\sum\limits_{u \in U} |G^{u}|$.
\end{corollary}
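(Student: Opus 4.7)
The plan is to apply Theorem~\ref{theorem:orbit_counting} directly. The only substantive content is verifying that the orbits of $U$ acting on $G$ coincide with the strongly connected components of $2\mathrm{S}(G;L,R)$; once this identification is made, the orbit-counting formula gives the desired equation immediately.

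First I would recall the setup already established in the paragraph preceding the corollary: under the hypothesis that $\mathcal{W}(L)$ and $\mathcal{W}(R)$ are subgroups of $G$, the set $U$ is a subgroup of $G\times G$, and the formula $g\cdot(w_{L,n},w_{R,n}) = w_{L,n}^{-1} g w_{R,n}$ defines a right action of $U$ on $G$. So no preliminary verification is needed beyond pointing back to that discussion.

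Next I would identify the orbits with the strongly connected components. By definition of the action, the orbit of $g$ is
\[
g\cdot U = \{\, w_{L,n}^{-1} g w_{R,n} \;:\; n\in\mathbb{N},\ w_{L,n}\in\mathcal{W}(L),\ w_{R,n}\in\mathcal{W}(R)\,\}.
\]
Since $w_{L,n}^{-1} = w_{L^{-1},n}$, an element $h$ lies in $g\cdot U$ if and only if there is a directed path of some length $n$ from $g$ to $h$ in $2\mathrm{S}(G;L,R)$. By Proposition~\ref{lem:two_way_connectedness}, under our hypothesis the existence of a directed path from $g$ to $h$ is equivalent to the existence of directed paths in both directions, so $g\cdot U$ is precisely the strongly connected component of $g$. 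Thus the number of orbits equals the number $N$ of strongly connected components.

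Finally I would invoke Theorem~\ref{theorem:orbit_counting} for the action of $U$ on $G$, giving
\[
N\cdot |U| \;=\; \sum_{u\in U} |G^u|,
\]
which is the desired equation. The main (minor) obstacle is simply making the orbit-to-component identification precise, which relies on invoking Proposition~\ref{lem:two_way_connectedness} to ensure the one-sided reachability defining an orbit is equivalent to strong connectedness; everything else is a direct application of the general orbit-counting theorem.
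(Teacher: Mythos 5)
Your proposal is correct and follows essentially the same route as the paper: the paper establishes in the paragraph preceding the corollary that $U$ is a subgroup acting on $G$ with orbits equal to the strongly connected components, and then the corollary is an immediate application of Theorem~\ref{theorem:orbit_counting}. Your explicit use of Proposition~\ref{lem:two_way_connectedness} to justify the orbit-to-component identification just fills in a detail the paper leaves implicit.
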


\begin{example}
Let $2\mathrm{S}(G; L,R)$ be a connected digraph and let $H_N$ be any group of order $N$.  Then $2\mathrm{S}(G \times H_N; L \times \{e\}, R \times \{e\})$ has $N$ connected components.  This shows that the number $N$ of connected components may be arbitrarily large.
\end{example}

\begin{problem}
\label{problem3}
For a given group $G$, how many connected components can $2\mathrm{S}(G; L,R)$ have?  \end{problem}

Note that if $G=\langle L\rangle \langle R\rangle$ then by Theorem~\ref{theorem:number_weak_components} or Theorem~\ref{theorem:number_strong_components} the number of connected components will divide $|G|$, but Example~\ref{example:A5} shows this need not hold in general.

Based on the group action perspective and our observation about the connection between two-sided group digraphs and biquotients we pose a question motivated by a common construction in the biquotient setting.  We first define a generalization of $2\mathrm{S}(G; L,R)$.

\begin{definition}
Let $G$ be a group and $U$ be a non-empty subset of $G\times G$.  Define the digraph $2\mathrm{S}(G ; U)$ to have vertex set $G$ and a directed arc $(g,h)$ from $g$ to $h$ if and only if $h = u_l^{-1} g u_r$ for some $(u_l,u_r)\in U$.
\end{definition}

\begin{remark}
Observe that if $U = L\times R$, then $2\mathrm{S}(G; U) = 2\mathrm{S}(G; L, R)$.
\end{remark}

Motivated by the biquotient literature, we note a correspondence between the digraphs $2\mathrm{S}(G;U)$ and $2\mathrm{S}(G\times G ; \Delta G, U)$ where $\Delta G=\{(g,g)\mid g \in G\}$ is the diagonal of $G\times G$.  This correspondence is given by the map $\phi:G\times G \to G$, $\phi(g_1,g_2) = g_1^{-1}g_2$. Direct computation shows that the map $\phi$ takes arcs of $2\mathrm{S}(G\times G ; \Delta G, U)$ to arcs of $2\mathrm{S}(G;U)$. Additionally, it produces a bijection between the connected components of $2\mathrm{S}(G\times G ; \Delta G, U)$ and the connected components of $2\mathrm{S}(G;U)$. This allows the number of connected components of $2\mathrm{S}(G;U)$ to be counted using our preceding results, especially when one notes that by Theorem~\ref{theorem:number_weak_components_coset} and Theorem~\ref{theorem:number_strong_components_coset} the connected components of  $2\mathrm{S}(G\times G ; \Delta G, U)$ are precisely the double cosets.

\begin{problem}
Under what conditions on $U$ do the connected components of $2\mathrm{S}(G;U)$ have the same size? Under what conditions are they isomorphic?
\end{problem}

%%%%%%%%%%%%%%%%%%%%%%%%%%%%%%%%%%%%%%%%%%
%                Reduction               %
%%%%%%%%%%%%%%%%%%%%%%%%%%%%%%%%%%%%%%%%%%

\section{Reduction results}
\label{sec:reduction}

In this section we prove Proposition~\ref{prop:subgroup_connected}, which relates connectedness of a two-sided digraph for a semi-direct product group to connectedness properties for the factors, and pose a final general problem.
Remark~\ref{rem:retract} will be useful in the proof of Proposition~\ref{prop:subgroup_connected}.

\begin{remark}
\label{rem:retract}
A group $G$ is said to be a \textit{semi-direct product} of its subgroups $H$ and $K$, written $G=H \rtimes K$, if $H$ is a normal subgroup of $G$, $G=HK$, and $H \cap K =\{e\}$.
A subgroup $K$ of a group $G$ is a \textit{retract} of $G$ if there exists a homomorphism $\phi:G \to G$ such that $\phi(g) \in K$ for all $ g \in G$ and $\phi(k)=k$ for all $k \in K$.

If $K$ is a retract of a group $G$ with retraction map $\phi$, then it is easy to verify that $G=H \rtimes K$ for $H=\ker \phi$.
Conversely, if $G=H \rtimes K$ then the map $\phi$ defined by $\phi(hk)=k$ is well-defined because $H \cap K =\{e\}$ and is a group homomorphism because $H\trianglelefteq G$.
Hence $G=H \rtimes K$ if and only if $K$ is a retract of $G$ with retraction $\phi$ and $H=\ker \phi$.  Denote $\phi(L)$ by $L^\phi$.
\end{remark}

\begin{proposition} \label{prop:subgroup_connected}
Let $K$ be a retract of a group $G$ under retraction $\phi$.
Then $2\mathrm{S}(G;L,R)$ is weakly connected if and only if $2\mathrm{S}(K;L^\phi,R^\phi)$ is weakly connected and $\ker \phi$ is weakly connected within $2\mathrm{S}(G;L,R)$.
\end{proposition}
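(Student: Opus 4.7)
The approach is to exploit the homomorphism $\phi:G\to K$, which restricts to the identity on $K$ because $K$ is a retract, together with the definitions $L^\phi=\phi(L)$ and $R^\phi=\phi(R)$. Two observations drive the argument: (i) applying $\phi$ factor-by-factor to a weak-path word $W_{\bar L,m,a}\, g\, W_{\bar R,a,m}$ yields a weak-path word $W_{\overline{L^\phi},m,a}\, \phi(g)\, W_{\overline{R^\phi},a,m}$ with the identical $(m,a)/(a,m)$ opposite-sign pattern, since $\phi(l^{-1})=\phi(l)^{-1}$; and (ii) conversely, any factor appearing in $L^\phi$, $(L^\phi)^{-1}$, $R^\phi$, or $(R^\phi)^{-1}$ has a preimage of the matching sign in $\bar L$ or $\bar R$, so a weak-path word in $K$ can be lifted to a weak-path word in $G$. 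I interpret ``$\ker\phi$ is weakly connected within $2\mathrm{S}(G;L,R)$'' to mean that every element of $\ker\phi$ is weakly connected to $e$ in $2\mathrm{S}(G;L,R)$.

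For the forward implication, assume $2\mathrm{S}(G;L,R)$ is weakly connected. The kernel condition is then immediate since all of $G$ lies in a single weakly connected component. To obtain weak connectedness of $2\mathrm{S}(K;L^\phi,R^\phi)$, let $k_1,k_2\in K\subseteq G$; then $k_2=W_{\bar L,m,a}\, k_1\, W_{\bar R,a,m}$ for some words, and applying $\phi$ using $\phi|_K=\mathrm{id}_K$ gives $k_2=W_{\overline{L^\phi},m,a}\, k_1\, W_{\overline{R^\phi},a,m}$, showing $k_1\sim k_2$ in $2\mathrm{S}(K;L^\phi,R^\phi)$.

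For the reverse implication, fix $g\in G$; by transitivity of $\sim$ it suffices to show $g\sim e$. Set $k=\phi(g)\in K$. Weak connectedness of $2\mathrm{S}(K;L^\phi,R^\phi)$ supplies $e=W_{\overline{L^\phi},m,a}\, k\, W_{\overline{R^\phi},a,m}$. Using observation~(ii), lift this equation to words $W_{\bar L,m,a}$ and $W_{\bar R,a,m}$ in $G$ with the matching sign pattern, and define $h:=W_{\bar L,m,a}\, g\, W_{\bar R,a,m}$. Applying $\phi$ gives $\phi(h)=e$, so $h\in\ker\phi$, while the construction of $h$ shows $g\sim h$ in $2\mathrm{S}(G;L,R)$. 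The kernel hypothesis then yields $h\sim e$, and transitivity produces $g\sim e$. The main obstacle is the bookkeeping needed to confirm that both projection and lifting preserve the opposite-sign convention of a weak-path word; once that is verified, the rest of the argument is a routine transitivity chase, and no additional digraph machinery beyond the definitions already in the paper is required.
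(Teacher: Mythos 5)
Your proof is correct. The forward direction is identical to the paper's: $\phi$ sends arcs to arcs, so it projects weak paths in $2\mathrm{S}(G;L,R)$ onto weak paths in $2\mathrm{S}(K;L^\phi,R^\phi)$, and the kernel condition is immediate. Your converse, however, is organized differently from the paper's. The paper decomposes $g=hk$ with $h\in\ker\phi$ and $k\in K$, expresses $k$ as a weak-path word in $\bar{L}^\phi$ and $\bar{R}^\phi$, lifts each factor by inserting correction terms $h_i^{-1}h_i$ from the kernel, and then invokes normality of $\ker\phi$ to sweep all the kernel factors into a single element $h'$, producing a path from $h'$ to $g$. You instead lift the weak-path word connecting $\phi(g)$ to $e$ in $K$ (choosing, for each factor $\phi(l)^{\pm1}$ or $\phi(r)^{\pm1}$, a preimage $l^{\pm1}$ or $r^{\pm1}$ of the same sign), apply the lifted word directly to $g$ to obtain $h$, and observe that $\phi(h)=e$ forces $h\in\ker\phi$; the kernel hypothesis then finishes the argument. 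This sidesteps both the explicit decomposition $g=hk$ and the normality bookkeeping (normality of the kernel being implicit in $\phi$ being a homomorphism), and is arguably the cleaner of the two arguments; the paper's version has the minor advantage of making visible exactly where the correction terms live. Your interpretation of ``$\ker\phi$ is weakly connected within $2\mathrm{S}(G;L,R)$'' matches the paper's usage, and your sign-pattern bookkeeping for both projection and lifting is sound.
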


\begin{proof}
Assume that $2\mathrm{S}(G;L,R)$ is weakly connected.  
Then certainly $H=\ker\phi$ is weakly connected within $2\mathrm{S}(G;L,R)$.
Observe that $2\mathrm{S}(K;L^\phi,R^\phi)$ is also weakly connected because the retraction $\phi:G \to K$ sends the arc $(g,l^{-1} g r)$ in $2\mathrm{S}(G;L,R)$ to the arc 
\[(\phi(g),\phi(l^{-1} g r))=(\phi(g), \phi(l)^{-1}\phi(g)\phi(r))\]
in $2\mathrm{S}(K;L^\phi,R^\phi)$, i.e., $\phi$ induces a retraction from $2\mathrm{S}(G,L,R)$ to $2\mathrm{S}(K;L^\phi,R^\phi)$.

Conversely, assume that $2\mathrm{S}(K;L^\phi,R^\phi)$ is weakly connected and $H=\ker\phi$ is weakly connected within $2\mathrm{S}(G;L,R)$.
We show that for every $g \in G$ there is a path in $2\mathrm{S}(G;L,R)$ from the identity to $g$.
Write $g=hk$ for $h \in H$ and $k \in K$.  Using that there is a path from $e$ to $k$ in $2\mathrm{S}(K,L^\phi,R^\phi)$, write $k=W_{\bar{L}^\phi,m,a}W_{\bar{R}^\phi,a,m}$ and then
\[g=hw_{\bar{L}^\phi,m,a}w_{\bar{R}^\phi,a,m}.\]
For each factor $k_i \in \bar{R}^\phi$ in $w_{\bar{R}^\phi,a,m}$, find $h_i \in H$ so that $h_ik_i \in \bar{R}$ and insert $h_i^{-1}h_i$ before $k_i$ in $w_{\bar{R}^\phi,a,m}$.
Insert similarly appropriate expressions for the identity before each factor from $\bar{L}^\phi$ in $w_{\bar{L}^\phi,m,a}$.
Then use $H \trianglelefteq G$ to rewrite $g$ as
\[g=W_{\bar{L},m,a}h'W_{\bar{R},a,m},\]
where $h' \in H$, exhibiting a path from $h'$ to $g$.
Since there is a path from $e$ to $h'$ in $2\mathrm{S}(G;L,R)$, there is also a path from $e$ to $g$ in $2\mathrm{S}(G;L,R)$.
This proves $2\mathrm{S}(G;L,R)$ is weakly connected.
\end{proof}

\begin{problem}
Develop analogues of earlier results about numbers of connected components and isomorphisms between them in the setting of semi-direct products. 
\end{problem}

\begin{example}
Consider the digraph $2\mathrm{S}(D_6;\{\sigma\},\{\sigma^2,\tau\})$, where $D_6=\langle\sigma\rangle\rtimes\langle\tau\rangle$. Given $\sigma^n\in D_6$, the arc $(\sigma^n,\sigma^{-1}\sigma^n\sigma^2)=(\sigma^n,\sigma^{n+1})$ shows that $\langle\sigma\rangle$ is weakly connected in $2\mathrm{S}(D_6;\{\sigma\},\{\sigma^2,\tau\})$. Furthermore, $2\mathrm{S}(\langle\tau\rangle;L^\phi,R^\phi)=2\mathrm{S}(\langle\tau\rangle;\{e\},\{e,\tau\})$ is connected since the graph consists of two vertices $e$ and $\tau$ with arcs between $e$ and $\tau$ and loops at each. Therefore by Proposition~\ref{prop:subgroup_connected}, $2\mathrm{S}(D_6;\{\sigma\},\{\sigma^2,\tau\})$ is weakly connected.
\end{example}

\begin{example}
The two-sided group digraph $2\mathrm{S}(D_6;\{\tau,\tau\sigma^5\},\{\tau\sigma,\tau\sigma^2\})$ in Example~\ref{example:D6B} is disconnected.
Here $2\mathrm{S}(K;L^\phi,R^\phi)$ consists of isolated vertices $e$ and $\tau$ with a loop at each and $H$ is weakly connected within 
$2\mathrm{S}(D_6;\{\tau,\tau\sigma^5\},\{\tau\sigma,\tau\sigma^2\})$.
\end{example}

Using an argument similar to the one in the proof of Proposition~\ref{prop:subgroup_connected}, one can prove the following.

\begin{corollary}\label{cor:normal_retract}
Given a group $G$ and a normal subgroup $N$ let $\phi:G\to G/N$ be the canonical projection. Then $2\mathrm{S}(G;L,R)$ is weakly connected if and only if $2\mathrm{S}(G/N;L^\phi,R^\phi)$ is weakly connected and $N$ is weakly connected within $2\mathrm{S}(G;L,R)$.
\end{corollary}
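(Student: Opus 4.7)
The plan is to mirror the proof of Proposition~\ref{prop:subgroup_connected}, using the canonical projection $\phi:G\to G/N$ in place of the retraction. The only essential difference is that there is no complement subgroup $K\leq G$, so every element of $G$ must be compared with an arbitrary representative of its coset $gN$ rather than with a distinguished element of $K$.

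For the forward direction, assume $2\mathrm{S}(G;L,R)$ is weakly connected. Then $N$ is trivially weakly connected within it. Moreover, $\phi$ sends each arc $(g,l^{-1}gr)$ of $2\mathrm{S}(G;L,R)$ to the arc $\bigl(\phi(g),\phi(l)^{-1}\phi(g)\phi(r)\bigr)$ of $2\mathrm{S}(G/N;L^\phi,R^\phi)$, since $\phi(l)\in L^\phi$ and $\phi(r)\in R^\phi$. Given any $\bar{x},\bar{y}\in G/N$, choose lifts $x,y\in G$, take a weak path from $x$ to $y$ in $2\mathrm{S}(G;L,R)$, and project it through $\phi$ to obtain a weak path from $\bar{x}$ to $\bar{y}$.

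For the converse, fix $g\in G$; I need to show $e\sim g$ in $2\mathrm{S}(G;L,R)$. Weak connectedness of $2\mathrm{S}(G/N;L^\phi,R^\phi)$ yields a weak path from $eN$ to $gN$, expressible as $gN = W_{\bar{L}^\phi,m,a}\,W_{\bar{R}^\phi,a,m}$. Lift each factor to a preimage in $\bar{L}$ or $\bar{R}$; since a preimage of an element of $L^\phi$ can be chosen in $L$ and a preimage of its inverse in $L^{-1}$, the opposite-sign correspondence between the two sides is preserved. This produces $W_{\bar{L},m,a}\,W_{\bar{R},a,m}\in G$ whose image under $\phi$ is $gN$, so $W_{\bar{L},m,a}\,W_{\bar{R},a,m} = gn$ for some $n\in N$ and hence $g = W_{\bar{L},m,a}\,W_{\bar{R},a,m}\,n^{-1}$.

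The key step uses $N\trianglelefteq G$ to push $n^{-1}$ leftward past $W_{\bar{R},a,m}$ without disturbing that word: setting $n' := W_{\bar{R},a,m}\,n^{-1}\,W_{\bar{R},a,m}^{-1}\in N$, we obtain
\[
g \;=\; W_{\bar{L},m,a}\,n'\,W_{\bar{R},a,m},
\]
which exhibits a weak path from $n'$ to $g$ in $2\mathrm{S}(G;L,R)$. Since $e,n'\in N$ and $N$ is weakly connected within $2\mathrm{S}(G;L,R)$ by hypothesis, $e\sim n'\sim g$, completing the proof. The main obstacle is purely bookkeeping: confirming that the opposite-sign $W$ structure survives both lifting from $G/N$ and the commutation of $n^{-1}$ through $W_{\bar{R},a,m}$. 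Once that is checked, normality of $N$ converts the error term $n^{-1}$ into an element of $N$ sitting in the middle of the word, where the hypothesis on $N$ finishes the argument.
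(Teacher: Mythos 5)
Your proof is correct and follows essentially the same route the paper intends: the paper proves Corollary~\ref{cor:normal_retract} by the same argument as Proposition~\ref{prop:subgroup_connected}, namely projecting arcs forward via $\phi$ for one direction and, for the converse, lifting a word factorization from the quotient and using normality of $N$ to collect the resulting error term as an element of $N$ sitting between the $\bar{L}$- and $\bar{R}$-words. Your bookkeeping of the opposite-sign $W$ structure under lifting and under conjugation of $n^{-1}$ past $W_{\bar{R},a,m}$ is the right check, and it goes through.
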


In both  Proposition~\ref{prop:subgroup_connected} and Corollary~\ref{cor:normal_retract} under the further assumption that $\mathcal{W}(L)$ and $\mathcal{W}(R)$ are subgroups of $G$ similar conclusions hold for strong connectedness.

\subsection*{Acknowledgment}
We thank Briana Foster-Greenwood for helpful discussions, particularly during the initial phases of this project, and Yu Chen for the observation behind Corollary~\ref{cor:groupaction}.  We thank the referee for the encouragement to add further problems and for particular suggestions leading to Problems~\ref{problem1} and~\ref{problem3}.

\bibliographystyle{alpha}
\bibliography{2sgdg}

\begin{thebibliography}{MSZS92}

\bibitem[ABR90]{ABR1990}
Fred Annexstein, Marc Baumslag, and Arnold~L. Rosenberg.
\newblock Group action graphs and parallel architectures.
\newblock {\em SIAM J. Comput.}, 19(3):544--569, 1990.

\bibitem[Ani12]{Anil2012}
K.~V. Anil.
\newblock Generalized cayley digraphs.
\newblock {\em Pure Math. Sci.}, 1(1):1--12, 2012.

\bibitem[DeV11]{DeVito-thesis}
J.~DeVito.
\newblock {\em The Classification of Simply Connected Biquotients of Dimention
  at most 7 and 3 New Examples of Almost Positively Curved Manifolds}.
\newblock PhD thesis, Univ. of Penn., 2011.

\bibitem[Esc82]{Eschenburg-Paper}
J.-H. Eschenburg.
\newblock New examples of manifolds with strictly positive curvature.
\newblock {\em Invent. Math.}, 66(3):469--480, 1982.

\bibitem[Esc84]{Eschenburg-Habilitation}
J.-H. Eschenberg.
\newblock {\em Freie Isometrische Aktionen auf Kompakten Lie-Gruppen mit
  Positiv Gekr\"ummten Orbitr\"aumen}.
\newblock Habilitation, Univ. M\"unster, 1984.

\bibitem[Gau97]{Gauyacq1997}
Ginette Gauyacq.
\newblock On quasi-{C}ayley graphs.
\newblock {\em Discrete Appl. Math.}, 77(1):43--58, 1997.

\bibitem[GM74]{Gromoll-Meyer}
D.~Gromoll and W.~Meyer.
\newblock An exotic sphere with nonnegative sectional curvature.
\newblock {\em Ann. of Math.}, 100(2):401--406, 1974.

\bibitem[IP16]{IrdamusaPraeger2016}
M.~N. Iradmusa and C.~E. Praeger.
\newblock Two-sided group digraphs and graphs.
\newblock {\em J.~Graph Theory}, 82(3):279--295, 2016.

\bibitem[KP03]{KP2003}
Andrei~V. Kelarev and Cheryl~E. Praeger.
\newblock On transitive {C}ayley graphs of groups and semigroups.
\newblock {\em European J. Combin.}, 24(1):59--72, 2003.

\bibitem[MSZS92]{MSS1992}
Dragan Maru{\v{s}}i{\v{c}}, Raffaele Scapellato, and Norma Zagaglia~Salvi.
\newblock Generalized {C}ayley graphs.
\newblock {\em Discrete Math.}, 102(3):279--285, 1992.

\end{thebibliography}

\end{document}